\documentclass[12pt]{amsart}
\usepackage{amsfonts}
\usepackage{amssymb}
\usepackage{color}
\usepackage{blindtext}
\usepackage{mathtools}
\usepackage{bbm}

\makeatletter \@addtoreset{equation}{section}

\makeatother \makeatletter \normalbaselineskip=14pt
\normalbaselines \headsep10mm
\def\OL{\relax\ifmmode {\sf L}\else{\textsf L}\fi}
\def\OR{\relax\ifmmode {\sf R}\else{\textsf R}\fi}

\newcommand{\mb}{\mathbb}
\newcommand{\mc}{\mathcal}
\newcommand{\eul}{\mathfrak}

\newcommand{\bou}{_{\scriptscriptstyle{\rm b}}}

\newcommand{\A}{\eul A}
\newcommand{\Ao}{{\eul A}_{\scriptscriptstyle 0}}

\newcommand{\vp}{\varphi}

\newcommand{\mult}{\,{\scriptstyle \square}\,}
\newcommand{\D}{{\mc D}}

\def\x{\relax\ifmmode {\mbox{*}}\else*\fi}

\newcommand{\id}{\mathbbm{1}}
\newcommand{\ip}[2]{\langle{#1}|{#2}\rangle}
\newcommand{\ipp}[2]{{#1}|{#2}\rangle}

\newtheorem{defn}{Definition}[section]
\newtheorem{prop}[defn]{Proposition}
\newtheorem{thm}[defn]{Theorem}
\newtheorem{lemma}[defn]{Lemma}

\theoremstyle{remark}
\newtheorem{rem}[defn]{Remark}
\newtheorem{example}[defn]{Example}
\newcommand{\bedefi}{\begin{defn}$\!\!${\bf }$\;$\rm }
\newcommand{\findefi}{ \end{defn}}

\newcommand{\SSA}{{\mathcal S}_{\Ao}(\A)}

\newcommand{\betheo}{\begin{thm}}
\newcommand{\entheo}{\end{thm}}

\newcommand{\becor}{\begin{coroll}}
\newcommand{\encor}{\end{coroll}}

\newcommand{\belem}{\begin{lemma}}
\newcommand{\enlem}{\end{lemma}}

\newcommand{\beprop}{\begin{prop}}
\newcommand{\enprop}{\end{prop}}
\newcommand{\berem}{\begin{rem}$\!\!${\bf }$\;$\rm }
\newcommand{\beex}{\begin{example}$\!\!${\bf }$\;$\rm }
\newcommand{\enex}{ \end{example}}
\newcommand{\enrem}{ \end{rem}}

\newcommand{\odelta}{\overline{\delta}}

\def\H{{\mathcal H}}
\newcommand{\wmult}{\mbox{\raisebox{1pt}{$\scriptscriptstyle{
\square}$}}}

\newcommand{\e}{{\sf e}}



\hyphenation{au-to-morp-hism}
\hyphenation{au-to-morp-hisms}

\begin{document}
\title[Derivations and automorphism groups]{Unbounded derivations and *-automorphisms groups of Banach quasi *-algebras}
\author{Maria Stella Adamo}
\author{Camillo Trapani}

\address{\textsc{Maria Stella Adamo}, Dipartimento di Matematica e Informatica, Universit\`a di Palermo, I-90123 Palermo, Italy}  \email{mariastella.adamo@community.unipa.it; msadamo@unict.it}
\address{\textsc{Camillo Trapani}, Dipartimento di Matematica e Informatica, Universit\`a di Palermo, I-90123 Palermo, Italy} \email{camillo.trapani@unipa.it}
\subjclass[2010]{Primary 46L57; Secondary 46L08, 47L60 }
\date{\today}
\maketitle

\textsc{\abstractname{}}: This paper is devoted to the study of unbounded derivations on Banach quasi *-algebras with a particular emphasis to the case when they are infinitesimal generators of one parameter automorphisms groups.
Both of them, derivations and automorphisms are considered in a weak sense; i.e., with the use of a certain families of bounded sesquilinear forms. Conditions for a weak *-derivation to be the generator of a *-automorphisms group are given.

\section{Introduction}\label{Sec1}
The aim of this paper is to investigate unbounded derivations on Banach quasi *-algebras, focusing, in particular, on the case in which derivations arise as infinitesimal generators of one parameter *-au\-to\-mor\-phisms groups. In the case of  Banach *-algebras, a derivation is a linear map defined on a *-subalgebra for which the Leibnitz rule holds. These objects have a strict relation with several branches of Mathematics and Physics, especially with quantum theories. On one hand they naturally appear in the representation theory of Lie algebras and, on the other hand, commutators (the prototypes of derivations) and related commutation relations constitute the cornerstone of the mathematical description of quantum theories.

Derivations on C*-algebras, Banach *-algebras or even more general structures as partial *-algebras  have been extensevely studied in order to answer questions about closability, spatiality or time-evolution systems stemming out by derivations through one parameter automorphisms groups, often under different points of view (see \cite{Ant3,Ant4,Ant2,Bag1,Brat1,Brat2,Kish,Sakai,weigt,WZ1,WZ2}).

A Banach quasi *-algebra $(\A, \Ao)$ arises in natural way as the completion, $\A$,  of a given *-algebra $\Ao$ under a norm topology, in the case when the multiplication is not jointly continuous and, for this reason, it has the peculiarity that the product is defined only for selected couples of elements: it is in other words, a partial *-algebra  (see \cite{AT,Ant1,Bag3}). The possible lack of an everywhere defined multiplication reveal new features and requires a convenient adaptation of very familiar concepts. In the case of derivations a weaker concept is needed, which is introduced by making use of certain families of bounded positive invariant  sesquilinear forms.

In this paper, we examine some properties of weak *-derivations on Banach quasi *-algebras, in particular those that characterize them as generators of one parameter groups of weak *-automorphisms. We apply our results in order to study examples of derivations not treatable in the classical algebraic background.

In details, in Section 2 we remind some definitions, properties and preliminary results useful in what follows. Passing to Section 3, densely defined derivations on a *-semisimple Banach quasi *-algebra $(\A,\Ao)$ are investigated, starting from inner qu*-derivations, i.e. those that can be written, for a fixed $h\in\A$, as $\delta_h(x)=i[h,x]$ with $x\in\Ao$. In this case, it is shown that every inner qu*-derivation is closable, not depending on the nature of the element $h$. 
Most likely, the closure is not again a derivation in the usual sense for the failure of the Leibnitz rule. Therefore, we employ sesquilinear forms to find a good candidate for a weak Leibnitz rule. What we achieve is indeed more general and it is examined in details.

In Section 4 we introduce weak *-automorphisms of a Banach quasi *-algebra and derive some properties of the generator of a one parameter group of weak*-automorphisms. 

In Section 5 we investigate  the possibility of extending to the case of *-semisimple Banach quasi *-algebras the well known result of Bratteli and Robinson theorem for C*-algebras \cite{Brat2} about conditions on a *-derivation for being the infinitesimal generator of a one-parameter group of the type described above. 

Finally, in Section 6, we apply our results to compute the one parameter group generated by a inner qu*-derivation and give a physical example (see \cite{Bag2,Bag4}) that motivates our choice to examine derivations in a more general context when the implementing element is unbounded.

\section{Preliminaries and basic results}\label{Sec2}
In this section we give some definitions and preliminary results need\-ed along the paper. For more details, see \cite{Ant1}.

\subsection{Basic definitions}
A {\em quasi *--algebra} $(\A, \Ao)$ is a pair consisting of a vector space $\A$ and a *--algebra $\Ao$ contained in $\A$ as a subspace and such that
\begin{itemize}
\item[(i)] $\A$ carries an involution $a\mapsto a^*$ extending the involution of $\Ao$;
\item[(ii)] $\A$ is  a bimodule over $\A_0$ and the module multiplications extend the multiplication of $\Ao$. In particular, the following associative laws hold:
\begin{equation}\notag \label{eq_associativity}
(xa)y = x(ay); \ \ a(xy)= (ax)y, \; \forall \, a \in \A, \,  x,y \in \Ao;
\end{equation}
\item[(iii)] $(ax)^*=x^*a^*$, for every $a \in \A$ and $x \in \Ao$.
\end{itemize}

A quasi *-algebra $(\A, \Ao)$ is \emph{unital} if there is an element $\mathbbm{1}\in \Ao$, such that $a\mathbbm{1}=a=\mathbbm{1} a$, for all $a \in \A$; $\mathbbm{1}$ is unique and called the \emph{unit} of $(\A, \Ao)$.

\bedefi \label{qu_deriv}
Let $(\A,\Ao)$ be a quasi *-algebra and $\delta$ a linear map of $\Ao$ into $\A$. We say that $\delta$ is a {\em qu*-derivation} of $(\A,\Ao)$ if
\begin{itemize}
\item[(i)]$\delta(x^*)=\delta(x)^*, \; \forall x \in \Ao$
\item[(ii)]$\delta(xy)=\delta(x)y + x\delta(y),\; \forall x,y \in \Ao$
\end{itemize}
\findefi

\beex \label{ex_spatial} The easiest example of a qu*-derivation on a quasi *-algebra is provided by the {\em commutator}; i.e., if $h=h^*\in \A$ we put
$$ \delta(x)= i [h,x]:=i(hx-xh).$$
\enex

Motivated by this example we give the following
\bedefi Let $(\A,\Ao)$ be a quasi *-algebra and $\delta$ a qu*-derivation of $(\A,\Ao)$. We say that $\delta$ is {\em inner} if there exists
$h=h^*\in \A$ such that
$$ \delta(x)= i [h,x].$$
\findefi

The framework of our whole discussion will be Banach quasi *-algebras (with particular attention to the *-semisimple case).
We remind the definition.

\bedefi A  quasi *-algebra $(\A,\Ao)$ is called a {\em normed quasi *-algebra} if a norm
$\|\cdot\|$ is defined on $\A$ with the properties
\begin{itemize}

\item[(i)]$\|a^*\|=\|a\|, \quad \forall a \in \A$;
\item[(ii)] $\Ao$ is dense in $\A$;
\item[(iii)]for every $x \in \Ao$, the map $R_x: a \in \A \to ax \in \A$ is continuous in
$\A$.
\end{itemize}
If $(\A,\| \cdot \|) $ is a Banach space, we say that $(\A,\Ao)$ is a {\em Banach quasi *-algebra}.\label{def} The norm topology of $\A$ will be denoted by $\tau_n$.
\findefi The continuity of the involution implies that
\begin{itemize}
\item[(iii')]for every $x \in \Ao$, the map $L_x: a \in \A \to xa \in \A$ is continuous in
$\A$.
\end{itemize}
If $x \in \Ao$, we put
$$\|x\|_0:= \max\left\{\sup_{\|a\|\leq 1}\|ax\|, \sup_{\|a\|\leq 1}\|xa\|\right\}.$$
Then,
\begin{align*}
&\|ax\|\leq \|a\|\|x\|_0, \quad \forall a \in \A, \, x \in \Ao;\\
&\|x^*\|_0 = \|x\|_0, \quad \forall  x \in \Ao.
\end{align*}

Let $(\A, \Ao)$ be a Banach quasi *-algebra. If $\Ao[\|\cdot\|_0]$ is a C*-algebra, then $(\A,\Ao)$ is called {\em proper CQ*-algebra}.
\medskip

If $E,F$ are Banach spaces, $\mathcal{D}(E)$ is a dense subspace of $E$ and $S:\mathcal{D}(E)\subset E\to F$ is a linear operator, then we define the following important subsets of the complex field
\begin{itemize}
\item the \textit{resolvent} $\rho(S):=\{\lambda \in\mathbb{C}:\exists(\lambda I-S)^{-1}\;\text{and it is bounded}\}$;
\item the \textit{spectrum} $\sigma(S):=\mathbb{C}\setminus\rho(S)$.
\end{itemize}

\subsection{*-Semisimple Banach quasi *-algebras}

\bedefi\label{semisim}
Let $(\A,\Ao)$ be a Banach quasi *-algebra and denote as $\mathcal{S}_{\Ao}(\A)$
the set of all sesquilinear forms $\varphi$ on $\A\times\A$ that satisfy the following conditions
\begin{itemize}
\item[(i)] $\varphi(a,a)\geq0$ for every $a\in\A$
\item[(ii)] $\varphi(ax,y)=\varphi(x,a^*y)$ for every $a\in\A$ and $x,y\in\Ao$
\item[(iii)]$|\varphi(a,b)|\leq \|a\|\|b\|$, for every $a,b, \in \A$.
\end{itemize}
The Banach quasi *-algebra $(\A,\Ao)$ is called \textit{*-semisimple} if
$$\mathcal{R}^*=\{a:\varphi(a,a)=0,\;\forall\varphi\in\SSA\}=\{0\}.$$
\findefi

According to the following Lemma, $\mathcal{R}^*$ can be characterized into different ways

\begin{lemma}\cite[Lemma 4.1]{Bag3}\label{lemma} Let $(\A,\Ao)$ be a Banach quasi *-algebra and let us consider the following sets
\begin{itemize}
\item $\mathcal{R}^*=\{a\in\A:\varphi(a,a)=0,\;\forall\varphi\in\SSA\}$
\item $\mathcal{R}_1=\{a\in\A:\varphi(ax,y)=0,\;\forall x,y\in\Ao,\forall\varphi\in\SSA\}$
\item $\mathcal{R}_2=\{a\in\A:\varphi(ax,ay)=0,\;\forall x,y\in\Ao,\forall\varphi\in\SSA\}.$
\end{itemize}
Then $\mathcal{R}^*=\mathcal{R}_1=\mathcal{R}_2$.
\end{lemma}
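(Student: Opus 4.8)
The plan is to prove the two identities through the cyclic chain $\mathcal{R}^*\subseteq\mathcal{R}_1=\mathcal{R}_2\subseteq\mathcal{R}^*$, using only two tools: the Cauchy--Schwarz inequality, which is available for each $\varphi\in\SSA$ because of positivity (i), and the density of $\Ao$ in $\A$ together with the continuity bound (iii). First I would dispose of the equality $\mathcal{R}_1=\mathcal{R}_2$, and then squeeze $\mathcal{R}^*$ between the two.

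For $\mathcal{R}_2\subseteq\mathcal{R}_1$ nothing beyond Cauchy--Schwarz is needed: if $a\in\mathcal{R}_2$, taking $y=x$ gives $\varphi(ax,ax)=0$, whence $|\varphi(ax,y)|^2\le\varphi(ax,ax)\varphi(y,y)=0$ for every $y\in\Ao$. Conversely, for $\mathcal{R}_1\subseteq\mathcal{R}_2$ I would use density: if $\varphi(ax,y)=0$ for all $y\in\Ao$, then the bounded conjugate-linear functional $b\mapsto\varphi(ax,b)$ vanishes on the dense subspace $\Ao$, hence on all of $\A$, so in particular $\varphi(ax,ay)=0$. This yields $\mathcal{R}_1=\mathcal{R}_2$.

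The crucial inclusion $\mathcal{R}^*\subseteq\mathcal{R}_1$ I would obtain through a change-of-form device. Fix $\varphi\in\SSA$ and $x\in\Ao$ with $x\neq0$, and set $\varphi_x(a,b):=\|x\|_0^{-2}\,\varphi(ax,bx)$ for $a,b\in\A$. Exploiting the associative laws of the bimodule structure together with the invariance (ii) for $\varphi$, one verifies that $\varphi_x$ again satisfies (i)--(iii), so that $\varphi_x\in\SSA$; the normalization by $\|x\|_0^{2}$ is precisely what restores the bound (iii), since $\|ax\|\le\|x\|_0\|a\|$. If now $a\in\mathcal{R}^*$, then the radical condition applied to this new form gives $\varphi_x(a,a)=0$, i.e. $\varphi(ax,ax)=0$; one more use of Cauchy--Schwarz then produces $\varphi(ax,y)=0$ for all $x,y\in\Ao$, that is $a\in\mathcal{R}_1$. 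The only point requiring care here is the membership $\varphi_x\in\SSA$, where (ii) is used in the rewritten shape $\varphi(c(ux),vx)=\varphi(ux,(c^*v)x)$ for $c\in\A$ and $u,v\in\Ao$.

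It remains to close the cycle with $\mathcal{R}_2\subseteq\mathcal{R}^*$, and this is where I expect the genuine obstacle. One must pass from the knowledge that $\varphi(ax,ax)=0$ for all $x\in\Ao$ to $\varphi(a,a)=0$, that is, recover $a$ from the family $\{ax:x\in\Ao\}$. When $(\A,\Ao)$ is unital this is immediate, since choosing $x=y=\id$ gives $\varphi(a,a)=\varphi(a\id,a\id)=0$. In the general case I would invoke a bounded approximate identity $(e_\lambda)\subset\Ao$: from $ae_\lambda\to a$ in $\A$ and the continuity of $\varphi$ one deduces $\varphi(a,a)=\lim_{\lambda,\mu}\varphi(ae_\lambda,ae_\mu)=0$. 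Thus the difficulty lies not in the algebraic manipulations, which are entirely governed by (ii) and Cauchy--Schwarz, but in this final approximation step, the only place where something more than the mere density of $\Ao$ is called upon.
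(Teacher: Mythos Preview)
The paper does not supply its own proof of this lemma; it merely quotes the result from \cite{Bag3}. Your argument is correct and is essentially the standard one: the change-of-form device $\varphi_x(a,b)=\|x\|_0^{-2}\varphi(ax,bx)$ is precisely the key observation for the inclusion $\mathcal{R}^*\subseteq\mathcal{R}_1$, and your verification that $\varphi_x\in\SSA$ is sound (the case $\|x\|_0=0$ is harmless, since then $ax=0$ for every $a$ and there is nothing to show).

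You are also right to flag the one genuine subtlety, namely closing the chain via $\mathcal{R}_2\subseteq\mathcal{R}^*$ when no unit is available. In the unital case---which is the setting actually used in the paper's applications (see Lemma~\ref{lemma_univmult}, the definition of resolvent following it, and all of Section~\ref{Sec6})---this step is immediate by taking $x=y=\id$. In the general case an approximate-identity argument of the kind you sketch is indeed what is needed; this is an honest limitation of the statement rather than a gap in your reasoning.
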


By means of the sesquilinear forms $\varphi\in\SSA$ we can define a new multiplication in $\A$ between couples of elements none of them belonging necessarily to $\Ao$.

\bedefi \cite{Frag2}\label{weak_mult} Let $(\A,\Ao)$ be a *-semisimple Banach quasi *-algebra. Let $a,b\in\A$. We say that the {\it weak} multiplication $a\wmult b$ is well-defined if there exists a (necessarily unique) $c\in\A$ such that:
$$
\varphi(bx,a^*y)=\varphi(cx,y),\;\forall\, x,y\in\Ao, \forall\,\varphi\in\SSA.
$$
In this case, we put $a\wmult b:=c$.
\findefi

The following result is immediate.

\begin{prop}
Let $(\A,\Ao)$ be a *-semisimple Banach quasi *-algebra. Then $\A$ endowed with the weak multiplication $\wmult$ is a partial *-alge\-bra.
\end{prop}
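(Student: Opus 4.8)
The plan is to verify the defining axioms of a partial *-algebra for the pair $(\A,\wmult)$, using the weak multiplication introduced in Definition~\ref{weak_mult}. Recall that a partial *-algebra requires: a vector space with involution; a subset of "multipliable" pairs that is compatible with the linear and *-structure; and distributivity together with the appropriate *-rule $(a\wmult b)^*=b^*\wmult a^*$ whenever $a\wmult b$ is defined. First I would fix notation: declare $a$ a left multiplier of $b$ (equivalently $b$ a right multiplier of $a$) precisely when $a\wmult b$ exists in the sense of Definition~\ref{weak_mult}, and take the involution to be the one $\A$ already carries. The *-semisimplicity hypothesis is what makes the element $c=a\wmult b$ \emph{unique}: if two elements $c,c'$ satisfied the defining identity, then $\vp((c-c')x,y)=0$ for all $x,y\in\Ao$ and all $\vp\in\SSA$, so $c-c'\in\mathcal R_1=\mathcal R^*=\{0\}$ by Lemma~\ref{lemma}. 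I would flag this uniqueness explicitly since it is what guarantees $\wmult$ is a well-defined partial binary operation.

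Next I would check the algebraic compatibilities, each of which reduces to a short manipulation of the defining equation $\vp(bx,a^*y)=\vp(cx,y)$. For the involution axiom, suppose $a\wmult b=c$; applying condition (ii) of Definition~\ref{semisim} to rewrite both sides, one wants to show $b^*\wmult a^*=c^*$. I would compute $\vp(a^*x,(b^*)^*y)=\vp(a^*x,by)$ and relate it to $\overline{\vp(by,a^*x)}$ using the sesquilinearity and positivity (which force the relevant hermiticity $\vp(u,v)=\overline{\vp(v,u)}$ on $\A\times\A$), then match it against $\vp(c^*x,y)$; the targeted identity $b^*\wmult a^* = c^*$ then follows again by uniqueness. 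For bilinearity of $\wmult$ on its domain and for the compatibility with the partial-multiplier structure (if $a\wmult b$ and $a\wmult b'$ exist then $a\wmult(b+\lambda b')$ exists and equals $a\wmult b+\lambda(a\wmult b')$, and symmetrically in the first slot), I would simply add the defining identities, using that each $\vp$ is sesquilinear and that the candidate element is produced by the same uniqueness argument.

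Finally I would confirm that $\wmult$ genuinely \emph{extends} the module structure, i.e. that it is consistent with the ambient bimodule operations so that the axioms of a partial *-algebra (which demand that multiplication by elements of a distinguished subspace be always defined and associative in the mixed sense) hold: for $x\in\Ao$ and $a\in\A$ one should recover $x\wmult a=xa$ and $a\wmult x=ax$, which follows by invoking condition (ii) of Definition~\ref{semisim} and, again, uniqueness. The routine distributive and mixed-associativity identities then transfer from the already-known bimodule laws of $(\A,\Ao)$.

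I expect the main obstacle to be the *-rule $(a\wmult b)^* = b^*\wmult a^*$. All the other axioms are near-immediate consequences of sesquilinearity and the uniqueness coming from *-semisimplicity, but the involution identity requires carefully tracking complex conjugates and the hermiticity of the forms $\vp\in\SSA$ through the defining equation, and verifying that the swapped expression indeed matches the defining identity for $c^*$ rather than merely a conjugate of it. Once the hermiticity relation $\vp(a,b)=\overline{\vp(b,a)}$ is established (from positivity and sesquilinearity via polarization), this becomes a bookkeeping exercise, but it is the step where an error is most likely to hide.
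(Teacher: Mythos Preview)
Your proposal is correct and is exactly the routine verification the paper has in mind: the paper gives no proof at all, merely prefacing the proposition with ``The following result is immediate,'' and your outline---uniqueness from $\mathcal R_1=\{0\}$ via Lemma~\ref{lemma}, bilinearity and the $*$-rule checked against the defining identity for $\wmult$, and consistency with the bimodule product for $x\in\Ao$---is precisely how one unpacks that immediacy. There is nothing to add; your flagged ``main obstacle'' (the hermiticity $\vp(a,b)=\overline{\vp(b,a)}$ needed for $(a\wmult b)^*=b^*\wmult a^*$) is indeed the only place requiring a moment's care, and it follows from positivity by polarization as you say.
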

We will denote by $R_w(\A)$ (resp. $L_w(\A)$) the space of universal right (resp. left) weak multipliers of $\A$; i.e., the space of all $b\in \A$ such that $a\wmult b$ (resp. $b\wmult a$) is well-defined, for every $a\in \A$. Clearly, $\Ao \subseteq L_w(\A) \cap R_w(\A)$.

Several results are known about the weak multiplication we defined above (see \cite{Frag2}). {As we shall see below the weak multiplication can be characterized through some closedness properties with respect to appropriate topologies defined by means of the sesquilinear forms $\varphi\in\SSA$.}

\bedefi
Let $(\A,\Ao)$ be a *-semisimple Banach quasi *-algebra. The sesquilinear forms $\varphi$ of $\SSA$ define the topologies generated by the following families of seminorms:
\begin{itemize}
\item[$\tau_w $:]
$\quad a\mapsto |\varphi(a x,y)|$, $\quad\varphi\in\SSA, x,y\in\Ao$;
\item[$\tau_s $:]
$\quad a\mapsto \varphi(a,a)^{1/2}$, $\quad\varphi\in\SSA$;
\item[$\tau_{s^*} $:]
$\quad a\mapsto \max\{\varphi(a,a)^{1/2},\varphi(a^*,a^*)^{1/2}\}, \quad\varphi\in\SSA$.
\end{itemize}
\findefi

\berem
From the continuity of $\varphi\in\SSA$ it follows that all
the topologies $\tau_w$, $\tau_s$, (and also $\tau_{s^*}$, if the
involution is $\tau$-continuous) are coarser than the initial norm topology of $\A$. \enrem

It is easy to show that

\begin{prop}\label{prop_43} The following statements are equivalent.
\begin{itemize}
\item[(i)] The weak product $a\wmult b$ is well defined.
\item[(ii)] There exists a sequence $\{y_n\}$ in $\Ao$ such that $\|y_n- b\|\to 0$
and $a y_n\stackrel{\tau_w}\longrightarrow c\in\A$.
\item[(iii)] There exists a sequence $\{x_n\}$ in $\Ao$ such that $\|x_n- a\|\to 0$
and $x_n b\stackrel{\tau_w}\longrightarrow c\in\A$.
\end{itemize}
\end{prop}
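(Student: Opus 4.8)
The plan is to establish the two equivalences (i)$\Leftrightarrow$(ii) and (i)$\Leftrightarrow$(iii) separately, reading off $\tau_w$-convergence directly from the defining relation of the weak product. The only ingredients I expect to need are: the density of $\Ao$ in $\A$; the continuity estimate $|\varphi(u,v)|\le\|u\|\,\|v\|$ coming from condition (iii) of $\SSA$; the separate continuity of the module maps $R_x,L_x$, in the quantitative form $\|(y_n-b)x\|\le\|y_n-b\|\,\|x\|_0$; and, above all, the invariance $\varphi(\alpha u,v)=\varphi(u,\alpha^* v)$ for $\alpha\in\A$ and $u,v\in\Ao$. I shall also use that every $\varphi\in\SSA$ is Hermitian, $\varphi(u,v)=\overline{\varphi(v,u)}$, which is automatic from $\varphi(a,a)\ge0$.

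For (i)$\Rightarrow$(ii) I fix a sequence $\{y_n\}\subset\Ao$ with $\|y_n-b\|\to0$ (density) and compute, for arbitrary $\varphi\in\SSA$ and $x,y\in\Ao$,
\[
\varphi((ay_n)x,y)=\varphi(a(y_nx),y)=\varphi(y_nx,a^*y),
\]
regrouping by associativity and then applying invariance, which is legitimate because $y_nx\in\Ao$. Since $\|y_nx-bx\|\le\|y_n-b\|\,\|x\|_0\to0$, the continuity estimate yields $\varphi(y_nx,a^*y)\to\varphi(bx,a^*y)$; if $a\wmult b=c$, this last term equals $\varphi(cx,y)$, so $ay_n\stackrel{\tau_w}{\to}c$. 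Reading the same chain of equalities in the opposite direction gives (ii)$\Rightarrow$(i): the hypothesis $ay_n\stackrel{\tau_w}{\to}c$ forces $\varphi(bx,a^*y)=\varphi(cx,y)$ for all admissible $\varphi,x,y$, which is exactly the defining relation of $a\wmult b=c$.

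The equivalence (i)$\Leftrightarrow$(iii) runs along the same template once the companion identity $\varphi((x_nb)x,y)=\varphi(bx,x_n^*y)$ is in hand, and this is where I expect the main bookkeeping to lie: unlike in (ii), the element $x_n\in\Ao$ now sits to the \emph{left} of $b\in\A$, so invariance cannot be applied in a single step. Instead I regroup $(x_nb)x=x_n(bx)$, view $x_nb$ as an element of $\A$ and move it across to get $\varphi(x,(x_nb)^*y)=\varphi(x,b^*x_n^*y)$ using $(x_nb)^*=b^*x_n^*$, and then combine Hermitian symmetry with a \emph{second} application of invariance—now admissible, since $x_n^*y\in\Ao$—to arrive at $\varphi(bx,x_n^*y)$. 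With this identity established, the isometry of the involution gives $\|x_n^*-a^*\|\to0$, hence $\|(x_n^*-a^*)y\|\to0$ and $\varphi(bx,x_n^*y)\to\varphi(bx,a^*y)$; comparison with $x_nb\stackrel{\tau_w}{\to}c$ then yields $\varphi(bx,a^*y)=\varphi(cx,y)$, i.e. $a\wmult b=c$, and the converse is read off the same way. One could alternatively avoid the direct computation by first recording the symmetry $a\wmult b=c\Leftrightarrow b^*\wmult a^*=c^*$ and transporting the already-proved (ii) for the pair $(b^*,a^*)$ into (iii) for $(a,b)$; I expect the direct identity above to be the cleaner route, since relating $\tau_w$ to its involuted version would itself require an extra argument.
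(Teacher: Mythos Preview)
Your argument is correct: the associativity laws and the invariance property of $\SSA$ are used exactly where they apply, and the two-step move in (iii)---first shifting $x_nb\in\A$ across to get $\varphi(x,(x_nb)^*y)$, then conjugating and shifting $b^*$ back with $x_n^*y\in\Ao$---is the right way around the obstruction that $bx\notin\Ao$.

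The paper itself gives no proof of this proposition; it is introduced with ``It is easy to show that'' and the details are omitted. Your write-up is therefore more complete than what appears in the paper, and follows the route one would naturally expect given the surrounding material.
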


\berem In Proposition \ref{prop_43}, if $a,b\in\A$ are such that $a\wmult b$ is well-defined, then \textit{every} sequence $\{y_n\}$ in $\Ao$ that approximates $b$ verifies condition (ii). Likewise, the same holds for a sequence $\{x_n\}$ is $\Ao$ such that $\|x_n-a\|\to0$.
\enrem

Let $(\A,\Ao)$ be a Banach quasi-*algebra. To every $a\in \A$ there correspond the linear maps $L_a$ and $R_a$ defined as
\begin{equation}\label{Left}L_a:\Ao\to\A\quad L_ax=ax\quad\forall x\in\Ao
\end{equation}
\begin{equation}\label{Right}
R_a:\Ao\to\A\quad R_ax=xa\quad\forall x\in\Ao
\end{equation}

If $(\A,\Ao)$ is a *-semisimple Banach quasi *-algebra, then the weak multiplication $\wmult$ allows us to extend $L_a$, (resp., $R_a$) to $R_w(a)$ (resp., $L_w(a)$).
Let us denote by $\hat{L}_a$, (resp. $\hat{R}_a$) these extensions. Then $\hat{L}_a b= a\wmult b$, for every $b \in R_w(a)$ and  $\hat{R}_a c= c\wmult a$, for every $c \in L_w(a)$.
\begin{prop}\label{prop_415}
Let $(\A,\Ao)$ be a *-semisimple Banach quasi *-algebra. Then,
for every $a \in \A$, $\hat{L}_a, \hat{R}_a$ are closed linear maps in $\A[\tau_n]$.
\end{prop}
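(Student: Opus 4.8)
The plan is to establish the closedness of $\hat{L}_a$ directly from the sesquilinear identity defining the weak product, passing to the limit; the case of $\hat{R}_a$ is then entirely symmetric. Concretely, I would take a sequence $\{b_n\}\subset R_w(a)$ with $\|b_n-b\|\to 0$ and $\|\hat{L}_a b_n-c\|\to 0$ for some $b,c\in\A$, and aim to show that $b\in R_w(a)$ with $\hat{L}_a b=c$; that is, that $a\wmult b$ is well defined and equals $c$.

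First I would record, for each $n$, the defining relation of the weak product (Definition \ref{weak_mult}): since $\hat{L}_a b_n=a\wmult b_n$,
\[
\vp(b_n x,a^*y)=\vp((a\wmult b_n)x,y),\qquad \forall\,x,y\in\Ao,\ \forall\,\vp\in\SSA.
\]
The crux is that, for fixed $x,y\in\Ao$ and $\vp\in\SSA$, both sides of this identity depend norm-continuously on the respective argument $b_n$ and $a\wmult b_n$. Using the boundedness $|\vp(u,v)|\le\|u\|\,\|v\|$ together with the estimate $\|ux\|\le\|u\|\,\|x\|_0$, I would bound
\[
|\vp(b_nx,a^*y)-\vp(bx,a^*y)|\le\|(b_n-b)x\|\,\|a^*y\|\le\|b_n-b\|\,\|x\|_0\,\|a^*y\|\longrightarrow 0,
\]
and likewise $|\vp((a\wmult b_n)x,y)-\vp(cx,y)|\le\|(a\wmult b_n)-c\|\,\|x\|_0\,\|y\|\to 0$. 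Letting $n\to\infty$ in the recorded identity then yields $\vp(bx,a^*y)=\vp(cx,y)$ for all $x,y\in\Ao$ and all $\vp\in\SSA$, which is exactly the assertion that $a\wmult b$ is well defined and equal to $c$. Hence $b\in R_w(a)$ and $\hat{L}_a b=c$, proving $\hat{L}_a$ closed in $\A[\tau_n]$.

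For $\hat{R}_a$ I would run the same scheme starting from the identity $\vp(ax,c_n^*y)=\vp((c_n\wmult a)x,y)$, using in addition that the involution is isometric, so that $\|c_n^*-c^*\|=\|c_n-c\|\to 0$ controls the convergence in the second slot $c_n^*y\to c^*y$. I do not expect a genuine obstacle here: the whole argument reduces to the joint norm-continuity of the forms $\vp$ (equivalently, to the fact that $\tau_w$ is coarser than $\tau_n$), and the only place where \emph{*}-semisimplicity is actually needed is in guaranteeing that the limiting element is unique, so that naming it $a\wmult b$ (resp. $c\wmult a$) is legitimate.
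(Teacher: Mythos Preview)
Your argument is correct: it is the natural direct verification from Definition~\ref{weak_mult}, passing to the limit in the identity $\vp(b_nx,a^*y)=\vp((a\wmult b_n)x,y)$ using the norm-continuity of each $\vp\in\SSA$ and of the module action. The paper itself does not supply a proof of Proposition~\ref{prop_415}; it is simply stated (the result belongs to the background material on the weak multiplication drawn from \cite{Frag2}), so there is nothing to compare your approach against beyond noting that yours is exactly the expected one-line computation.
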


\berem Proposition \ref{prop_415} implies, obviously, that $L_a$, $R_a$ are closable and $\overline{L}_a \subseteq \hat{L}_a$, $\overline{R}_a \subseteq \hat{R}_a$
\enrem
\bedefi\label{bounded} Let $(\A,\Ao)$ be a Banach quasi-*algebra. An element $a\in\A$  is called \textit{bounded} if the operators $L_a$ and $R_a$ defined in (\ref{Left}) and (\ref{Right}) are $\|\cdot\|-$continuous, thus extendible to the whole space $\A$.
The set of bounded elements is denoted by $\A_{\bou}$.
\findefi
\begin{lemma}\label{lemma_univmult} If $(\A,\Ao)$ is a *-semisimple Banach quasi-*algebra with unit $\id$, the set $\A_{\bou}$ of bounded elements is a *semisimple Banach algebra.
Moreover, $\A_{\bou}$ coincides with the set $R_w(\A)\cap L_w(\A)$.
\end{lemma}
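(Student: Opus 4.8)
The plan is to prove the identity $\A_{\bou}=R_w(\A)\cap L_w(\A)$ first, and then to use this description of bounded elements as universal two-sided weak multipliers to equip $\A_{\bou}$ with a Banach *-algebra structure and to read off its *-semisimplicity.

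For the inclusion $\A_{\bou}\subseteq R_w(\A)\cap L_w(\A)$ I would start from $a\in\A_{\bou}$, so that $L_a$ and $R_a$ extend to bounded operators $\overline{L}_a,\overline{R}_a$ on $\A$. Given $c\in\A$ and any $\{y_n\}\subset\Ao$ with $\|y_n-c\|\to0$, the products $ay_n=\overline{L}_a y_n$ converge to $\overline{L}_a c$ in norm, hence in $\tau_w$; by Proposition \ref{prop_43}(ii) this means $a\wmult c=\overline{L}_a c$ is well defined for every $c$, so $a\in L_w(\A)$, and symmetrically (via $xa=\overline{R}_a x$) one gets $a\in R_w(\A)$. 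The reverse inclusion carries the real content. If $b\in R_w(\A)\cap L_w(\A)$, then the maps $\hat{R}_b\colon a\mapsto a\wmult b$ and $\hat{L}_b\colon a\mapsto b\wmult a$ are defined on all of $\A$; by Proposition \ref{prop_415} they are closed. An everywhere-defined closed operator on a Banach space is bounded by the closed graph theorem, and since $\hat{R}_b$ and $\hat{L}_b$ restrict on $\Ao$ to $R_b$ and $L_b$ (because $x\wmult b=xb$ and $b\wmult x=bx$ for $x\in\Ao$), we conclude that $R_b,L_b$ are continuous, i.e. $b\in\A_{\bou}$. The main obstacle here is exactly the legitimacy of treating $\hat{R}_b,\hat{L}_b$ as genuine closed operators with full domain, which I would settle by invoking Proposition \ref{prop_415}; the closed graph theorem then finishes the argument.

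Next I would verify that $\A_{\bou}$ is a *-subalgebra. Closure under the involution follows from $xa^*=(ax^*)^*$ together with the isometry of $*$, which converts a bound for $L_a$ into one for $R_{a^*}$ and conversely. For closure under multiplication I set $ab:=a\wmult b=\overline{L}_a b$ for $a,b\in\A_{\bou}$; using the module associativity of $(\A,\Ao)$ and the continuity of $\overline{L}_a$ and of the maps $R_x$ $(x\in\Ao)$, I would show that $L_{ab}$ and $R_{ab}$ coincide on the dense subspace $\Ao$ with the bounded operators $\overline{L}_a\overline{L}_b$ and $\overline{R}_b\overline{R}_a$ respectively, so $ab\in\A_{\bou}$ and $\overline{L}_{ab}=\overline{L}_a\overline{L}_b$. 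Introducing $\|a\|_0:=\max\{\|\overline{L}_a\|,\|\overline{R}_a\|\}$, the assignment $a\mapsto\overline{L}_a$ is then an algebra homomorphism of $\A_{\bou}$ into the Banach algebra $B(\A)$ of bounded operators on $\A$, injective because $a=a\id=\overline{L}_a\id$; here the unit $\id$ is essential, guaranteeing both injectivity and that $\|\cdot\|_0$ is a genuine norm. Submultiplicativity and the *-isometry of $\|\cdot\|_0$ follow immediately, while completeness comes from $\|a\|\le\|\id\|\,\|a\|_0$: a $\|\cdot\|_0$-Cauchy sequence is $\|\cdot\|$-Cauchy, its limit $a\in\A$ satisfies $\overline{L}_{a_n}\to T$ and $\overline{R}_{a_n}\to S$ in $B(\A)$, and comparing actions on $\Ao$ shows $a\in\A_{\bou}$ with $\overline{L}_a=T$, $\overline{R}_a=S$. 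Thus $(\A_{\bou},\|\cdot\|_0)$ is a unital Banach *-algebra.

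Finally, *-semisimplicity of $\A_{\bou}$ is inherited from that of $(\A,\Ao)$: each $\varphi\in\SSA$ restricts to an invariant positive sesquilinear form on $\A_{\bou}\times\A_{\bou}$, since conditions (i)–(iii) of Definition \ref{semisim} survive the restriction, and any $a\in\A_{\bou}$ with $\varphi(a,a)=0$ for all $\varphi$ lies in $\mathcal{R}^*=\{0\}$. Hence these forms separate the points of $\A_{\bou}$, which is the required *-semisimplicity. I expect the delicate points to be concentrated in the second paragraph, namely the closed-graph argument for the reverse inclusion, and in the third, namely the verification that $\overline{L}_a\overline{L}_b$ and $\overline{R}_b\overline{R}_a$ genuinely implement left and right multiplication by $ab$; the remaining steps are routine bookkeeping.
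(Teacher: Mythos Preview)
Your proof is correct, and the only step the paper actually argues---the reverse inclusion $R_w(\A)\cap L_w(\A)\subseteq\A_{\bou}$ via Proposition~\ref{prop_415} and the closed graph theorem---is exactly your argument. The paper cites \cite{Ant1} for the Banach *-algebra structure of $\A_{\bou}$, its *-semisimplicity, and the inclusion $\A_{\bou}\subset R_w(\A)\cap L_w(\A)$, so your self-contained treatment of those parts simply fills in what the paper takes as known.
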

\begin{proof} The first statement and the inclusion $\A_{\bou}\subset R_w(\A)\cap L_w(\A)$ were shown in \cite{Ant1}.  Let $a\in R_w(\A)\cap L_w(\A)$ then $R_w(a)=L_w(a)=\A$. Thus $\hat{L}_a$ (resp., $\hat{R}_a$)  is closed and everywhere defined. Hence both $L_a$ and $R_a$ are bounded. \end{proof}

{If $(\A,\Ao)$ is a *-semisimple Banach quasi *-algebra with unit $\id$ and $a\in\A$, we define
\begin{itemize}
\item the resolvent $\rho(a)=\{\lambda \in {\mb C}:\exists (\lambda\id-a)^{-1} \in \A_{\bou} \}= \rho(\overline{L}_a)\cap \rho(\overline{R}_a) $
\item the spectrum $\sigma(a)={\mb C}\setminus\rho(a)$
\end{itemize}
As shown in \cite{ct1}, $\sigma(a)$ is a bounded subset of ${\mb C}$ if, and only if, $a\in \A_{\bou} $.}

\section{Extensions of qu*-derivations}\label{Sec3}
Let $(\A,\Ao)$ be a Banach quasi *-algebra. By the very definitions, $\A$ is a \textit{Banach module over $\Ao$}. Therefore, if $\Ao[\|\cdot\|_0]$ is a C*-algebra, or, in other words if
$(\A,\Ao)$ is a \textit{proper CQ*-algebra}, introduced in Section \ref{Sec2},
 then by Ringrose's theorem \cite{Ring}, $\delta$ is continuous from $\Ao[\|\cdot\|_0]$ into $\A[\|\cdot\|]$.
But we may also regard $\Ao$ as a $\|\cdot\|-$dense subspace of $\A$. Hence it is natural to pose the question as to whether $\delta$ can be extended beyond $\Ao$.

If $\delta$ is closable as a linear map from $\Ao[\|\cdot\|]$ into $\A[\|\cdot\|]$, then as usual we can define its closure $\odelta$ on the subspace
\begin{multline}\notag D(\odelta)=\{a\in \A: \exists \{x_n\}\subset \Ao, w\in \A \mbox{ s.t. } \|a-x_n\|\to 0 \mbox{ and } \\
\|\delta(x_n)-w\|\to 0\}.
\end{multline}
by
$$\odelta(a):=w= \lim_{n\to\infty}\delta(x_n).$$

In general, $\odelta$ is not a *-derivation, since $(D(\odelta),\Ao)$ is not necessarily a quasi *-algebra and the Leibniz rule may fail (see Example \ref{ex_noex}).
\medskip

The simplest case we could investigate is that of an inner qu*-derivation: not surprisingly, $\delta_h$ is continuous whenever the element $h\in\A$ that generates the qu*-derivation is bounded in the sense of Definition \ref{bounded}.

\begin{prop}\label{clos_inn}
Let $(\A,\Ao)$ be a *-semisimple Banach quasi *-algebra. Let $h\in\A$ be a fixed  element in $\A$ such that $h=h^*$ and $\delta_h$ the qu*-derivation defined as $\delta_h(x):=i[h,x]$ for $x\in\Ao$. Then $\delta_h$ is closable.
\end{prop}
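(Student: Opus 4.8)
The plan is to show that $\delta_h$ is closable by verifying the standard criterion: whenever $\{x_n\}\subset\Ao$ satisfies $\|x_n\|\to 0$ and $\|\delta_h(x_n)-w\|\to 0$ for some $w\in\A$, then necessarily $w=0$. Since $(\A,\Ao)$ is *-semisimple, by the definition of $\mathcal{R}^*=\{0\}$ it suffices to prove that $\varphi(w,w)=0$ for every $\varphi\in\SSA$, or equivalently, using Lemma \ref{lemma}, that $\varphi(wx,y)=0$ for all $x,y\in\Ao$ and all $\varphi\in\SSA$. This reduces the closability question to a computation with the invariant sesquilinear forms.

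First I would write out $\delta_h(x_n)=i(hx_n-x_n h)$ and test it against a form $\varphi\in\SSA$. For fixed $x,y\in\Ao$ I would consider $\varphi(\delta_h(x_n)x,y)$ and try to pass to the limit. The key tool is the invariance property (ii) of the forms in $\SSA$, namely $\varphi(ax,y)=\varphi(x,a^*y)$, which lets me move the unbounded element $h$ off the test vectors: terms like $\varphi(hx_n x,y)$ can be rewritten as $\varphi(x_n x, h^* y)=\varphi(x_n x,hy)$ using $h=h^*$, and similarly $\varphi(x_n hx,y)$. The point of this manipulation is that after shifting $h$ onto the second argument (or onto a test vector in $\Ao$), what remains involves $x_n$ multiplied on the left or right by genuine elements of $\Ao$, against which $\varphi$ is jointly bounded by condition (iii), $|\varphi(a,b)|\le\|a\|\|b\|$. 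Since $\|x_n\|\to 0$ and the $R_x$, $L_x$ operators are norm-continuous (Definition \ref{def}), these expressions tend to $0$.

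On the other hand, by continuity of $\varphi$ and the convergence $\|\delta_h(x_n)-w\|\to 0$, the same expressions $\varphi(\delta_h(x_n)x,y)$ converge to $\varphi(wx,y)$. Comparing the two limits forces $\varphi(wx,y)=0$ for all $x,y\in\Ao$ and all $\varphi\in\SSA$. By Lemma \ref{lemma} this says $w\in\mathcal{R}_1=\mathcal{R}^*=\{0\}$, whence $w=0$ and $\delta_h$ is closable.

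The step I expect to be the main obstacle is the rigorous handling of the term in which $h$ multiplies $x_n$ on the \emph{right} (the $\varphi(x_n h x, y)$ piece), because here $h$ sits between two elements and the associativity/module laws must be applied carefully: $h\in\A$ is not in $\Ao$, so I cannot freely form products like $hx$ as elements of $\Ao$, and I must instead keep $h$ adjacent to the $\Ao$-elements where the module action $\A\times\Ao\to\A$ is defined, or route it through the invariance relation onto a test vector $y\in\Ao$. Keeping track of which products are legitimately defined in the quasi *-algebra, and ensuring every intermediate quantity lives in $\A$ with $x_n$ always flanked by bona fide elements of $\Ao$ so that the norm estimate $|\varphi(\cdot,\cdot)|\le\|\cdot\|\,\|\cdot\|$ applies and annihilates in the limit, is the delicate bookkeeping at the heart of the argument.
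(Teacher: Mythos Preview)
Your proposal is correct and follows essentially the same route as the paper's proof: test $\delta_h(x_n)$ against $\varphi\in\SSA$ on vectors $u,v\in\Ao$, use invariance to shuttle $h$ and $x_n$ onto the test vectors, observe both resulting terms vanish by boundedness of $\varphi$ and $\|x_n\|\to 0$, and conclude $w=0$ via Lemma~\ref{lemma}. The ``obstacle'' you flag is handled exactly as you anticipate: for the $x_nh$-term one writes $\varphi((x_nh)u,v)=\varphi(u,(x_nh)^*v)=\varphi(u,h(x_n^*v))=\varphi(hu,x_n^*v)$, so $x_n^*$ lands next to $v\in\Ao$ and the norm estimate applies.
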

\begin{proof}
Let $\{x_n\}\subset\Ao$ be a sequence that vanishes as $n\to\infty$ and such that $\delta_h(x_n)$ is $\|\cdot\|-$Cauchy, i.e. there exists $w\in\A$ such that $\|\delta_h(x_n)-w\|\to0$ as $n\to+\infty$. We want to show that $w=0$.

On one hand, for every $\varphi\in\SSA$ and for every $u,v\in\Ao$,
\begin{align*}
\varphi(\delta_h(x_n)u,v)&=i\varphi(hx_nu,v)-i\varphi(x_nhu,v)\\
&=i\varphi(x_nu,hv)-i\varphi(hu,x_n^*v)\to0.
\end{align*}
On the other hand, by the hypotheses $\varphi(\delta_h(x_n)u,v)\to\varphi(wu,v)$, for every $\varphi\in\SSA$ and for every $u,v\in\Ao$. We conclude by Lemma \ref{lemma} and the arbitrary choice of $\varphi\in\SSA$.
\end{proof}

Let us now assume that $\delta$ is a closable qu*-derivation. We consider the question as to whether its closure $\odelta$ is a *-derivation in some weaker sense; i.e.; if a sort of Leibniz rule still hold.

{\begin{prop}\label{prop_123} Let $(\A,\Ao)$ be a *-semisimple Banach quasi *-algebra with $\A[\tau_w]$ sequentially complete. Let $\delta$ be a closable qu*-derivation of $(\A,\Ao)$ with closure $\odelta$. Then, if $a,b\in\mathcal{D}(\odelta)$ and $a\wmult b$ is well-defined, there exists an element $\odelta_w(a\wmult b)\in\A$ such that
$$\varphi(\odelta_w(a\wmult b)u,v)=\varphi(bu,\odelta(a)^*v)+\varphi(\odelta(b)u,a^*v)\quad\forall u,v\in\Ao,\varphi\in\SSA.$$
\end{prop}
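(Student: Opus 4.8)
The plan is to reduce everything to the genuine Leibniz rule available on $\Ao$ and then pass to the limit in the topology $\tau_w$, exploiting the sequential completeness of $\A[\tau_w]$ to manufacture the element $\odelta_w(a\wmult b)$. Since $a,b\in\mathcal{D}(\odelta)$, I first fix sequences $\{x_n\},\{y_n\}\subset\Ao$ with $\|x_n-a\|\to 0$, $\|\delta(x_n)-\odelta(a)\|\to 0$, $\|y_n-b\|\to 0$ and $\|\delta(y_n)-\odelta(b)\|\to 0$. Because $x_n,y_n\in\Ao$, the ordinary Leibniz rule of Definition \ref{qu_deriv} gives
$$\delta(x_ny_n)=\delta(x_n)y_n+x_n\delta(y_n),\qquad\text{for every } n.$$
I then test this identity against the seminorms generating $\tau_w$, i.e.\ I evaluate $\varphi(\,\cdot\,u,v)$ for arbitrary $u,v\in\Ao$ and $\varphi\in\SSA$, and compute the limit of each summand separately.

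For the first summand, associativity together with property (ii) of Definition \ref{semisim} (applied with the $\A$-element $\delta(x_n)$ and the $\Ao$-element $y_nu$) gives
$$\varphi(\delta(x_n)y_nu,v)=\varphi(y_nu,\delta(x_n)^*v).$$
Since the involution is isometric, $\delta(x_n)^*\to\odelta(a)^*$ in norm, while $y_nu\to bu$ by continuity of right multiplication by $u$; the joint continuity furnished by the bound (iii) of Definition \ref{semisim} then yields $\varphi(\delta(x_n)y_nu,v)\to\varphi(bu,\odelta(a)^*v)$. For the second summand I transfer the $\Ao$-factor $x_n$ across the form: using property (ii) and the involution relation $(x_n\delta(y_n))^*=\delta(y_n)^*x_n^*$,
$$\varphi(x_n\delta(y_n)u,v)=\varphi\bigl(u,(x_n\delta(y_n))^*v\bigr)=\varphi\bigl(u,\delta(y_n)^*(x_n^*v)\bigr)=\varphi(\delta(y_n)u,x_n^*v),$$
the last step again by property (ii). Now $\delta(y_n)u\to\odelta(b)u$ in norm and $x_n^*v\to a^*v$ in norm, so (iii) gives $\varphi(x_n\delta(y_n)u,v)\to\varphi(\odelta(b)u,a^*v)$. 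Summing the two limits,
$$\varphi(\delta(x_ny_n)u,v)\longrightarrow\varphi(bu,\odelta(a)^*v)+\varphi(\odelta(b)u,a^*v)=:T_\varphi(u,v).$$

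It remains to realize the functional $T_\varphi$ in the form $\varphi(d\,u,v)$ for a single $d\in\A$. Since $\varphi(\delta(x_ny_n)u,v)$ converges for every $\varphi\in\SSA$ and every $u,v\in\Ao$, the sequence $\{\delta(x_ny_n)\}$ is Cauchy with respect to each seminorm generating $\tau_w$, hence $\tau_w$-Cauchy. By the assumed sequential completeness of $\A[\tau_w]$ it converges in $\tau_w$ to some $d\in\A$, and by the very meaning of $\tau_w$-convergence $\varphi(d\,u,v)=\lim_n\varphi(\delta(x_ny_n)u,v)=T_\varphi(u,v)$ for all $u,v,\varphi$. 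Setting $\odelta_w(a\wmult b):=d$ establishes the asserted identity; uniqueness of $d$, and thus independence of the chosen approximating sequences, follows from *-semisimplicity via Lemma \ref{lemma}, since any two candidates would differ by an element of $\mathcal{R}_1=\mathcal{R}^*=\{0\}$.

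The delicate point is the second-summand computation: one must move the $\Ao$-factor $x_n$ to the right-hand entry of $\varphi$ so that the genuinely $\A$-valued factor $\delta(y_n)$ ends up paired only with \emph{norm}-convergent $\Ao$-elements. This is precisely what permits the use of the continuity estimate (iii) and circumvents any appeal to a joint continuity of the module action, which is not available in this setting. The final—and indispensable—ingredient is the sequential $\tau_w$-completeness hypothesis: it is exactly what converts the pointwise convergence of the sesquilinear forms into the existence of a bona fide element of $\A$ representing the limit, thereby giving meaning to $\odelta_w(a\wmult b)$.
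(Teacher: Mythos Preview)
Your argument is correct and follows essentially the same route as the paper's proof: approximate $a,b$ by $\Ao$-sequences, apply the genuine Leibniz rule to $\delta(x_ny_n)$, push the factors across $\varphi$ using invariance, pass to the limit in each seminorm, and invoke sequential $\tau_w$-completeness to obtain the limit element. Your treatment of the second summand is in fact more careful than the paper's one-line step $\varphi(x_n\delta(y_n)u,v)=\varphi(\delta(y_n)u,x_n^*v)$, which tacitly uses an extension of property~(ii) to the case where the inner factor lies in $\A$ rather than $\Ao$; your two-step detour through $(x_n\delta(y_n))^*$ stays strictly within the stated hypothesis (ii).
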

\begin{proof} 
Suppose that $\delta$ is a closable qu*-derivation and let $\odelta$ be its closure. Let $a,b\in\mathcal{D}(\odelta)$, then there exist sequences $\{x_n\},\{y_n\}$ of elements in $\Ao$ such that $\|x_n-a\|\to0$, $\|y_n-b\|\to0$ and $\|\delta(x_n)-\odelta(a)\|\to0$, $\|\delta(y_n)-\odelta(b)\|\to0$.

The sequence $\{x_ny_n\}\subset \Ao$ is $\tau_w-$convergent to $a\wmult b$ and $\{\delta(x_ny_n)\}$ is $\tau_w-$Cauchy. By the sequential completeness of $\A[\tau_w]$, there exists $c\in\A$ such that $\delta(x_ny_n)\xrightarrow[]{\tau_w}c$. Computing the $\tau_w-$limit
\begin{align*}
\varphi(\delta(x_ny_n)u,v)&=\varphi(\delta(x_n)y_nu,v)+\varphi(x_n\delta(y_n)u,v)\\
&=\varphi(y_nu,\delta(x_n)^*v)+\varphi(\delta(y_n)u,x_n^*v)\\
&\to\varphi(bu,\odelta(a)^*v)+\varphi(\odelta(b)u,a^*v),
\end{align*}
for every $u,v\in\Ao$, for all $\varphi\in\SSA$, we obtain
$$\odelta_w(a\wmult b):=c=\tau_w-\displaystyle\lim_{n\to\infty}\delta(x_ny_n)$$
and therefore
$$\varphi(\odelta_w(a\wmult b)u,v)=\varphi(bu,\odelta(a)^*v)+\varphi(\odelta(b)u,a^*v)\quad\forall\varphi\in\SSA,u,v\in\Ao.$$
\end{proof}

\beex
Consider the Banach quasi *-algebra $(L^p(\mathbb{R}),\mathcal{C}^{\infty}_c(\mathbb{R}))$. For $p\geq2$ $(L^p(\mathbb{R}),\mathcal{C}^{\infty}_c(\mathbb{R}))$ is a *-semisimple Banach quasi *-algebra. Define on $\mathcal{C}^{\infty}_c(\mathbb{R})$ the derivation $\delta(f)=f'$ for every $f\in\mathcal{C}^{\infty}_c(\mathbb{R})$, where $f'$ is the classical derivative of $f$.
{Then $\delta$ is closable and its closure} is the \textit{weak derivative} in $W^{1,p}(\mathbb{R})$. This is a favourable situation because $W^{1,p}(\mathbb{R})$ is a *-algebra and the Leibniz rule works for the weak  derivative.
\enex
\beex Working again with the derivative operator, we can construct an example where the domain of the closure of a qu*-derivation is a genuine quasi *-algebra.
Let
\begin{align*} D(S):= \{f\in L^2([0,1]): &f \mbox{ absolutely continuous,}\\ &f'\in L^2([0,1]), \, f(0)=f(1) \}\end{align*}
and define $Sf=f'$. $D(S)$ is a *-algebra with the usual operations and conjugation of functions and $S^*=-S$.
$D(S)$ is also a Hilbert space with the graph norm $\|f\|_S= (\|f\|^2 + \|Sf\|^2)^{1/2}.$ Let us denote it by $\H_S$. If $\H_S^\times$ denotes its conjugate dual, which is a Hilbert space endowed with the usual dual norm,
then $S$ maps $\H_S$ into $\H_S^\times$ continuously. In particular, $S:\H_S\to L^2([0,1])$ continuously, then it has a continuous extension $\hat{S}: L^2([0,1])\to \H_S^\times$ defined by $\ip{\hat{S}f}{g}=-\ip{f}{Sg}$ for $f\in L^2([0,1])$ and $g\in \H_S$.

We may also regard $\hat{S}$ as a densely defined operator in $\H_S^\times$.Then $S$ is closable since its hilbertian adjoint in $\H_S^\times$, denoted by $\hat{S}^\times$, is densely defined; in fact $\hat{S}^\times \supset -S$. It is clear that $S$ is a qu*-derivation of the Banach quasi *-algebra $(\H_S^\times, \H_S)$. Moreover, the inclusions $S\subset \hat{S}$ and $S\subset \hat{S}^\times$ imply that $D(S^{\times\times})= D(\hat{S}^{\times\times}) \supset L^2([0,1])$. But $\hat{S}=-S^\times$, by definition. Hence $D(S^{\times\times})=D(\overline{S})=D(\hat{S})=L^2([0,1])$, $\overline{S}$ denotes here the closure of $S$ in $\H_S^\times$. It is easily seen that $(L^2([0,1]), \H_S)$ is a quasi *-algebra and that the Leibniz rule works in this case. It is interesting to compute explicitly the action of $\hat{S}$ on some elements of $D(\overline{S})$.
For instance, if $f\in C^1([0,1])$ we can compute explicitly $\hat{S}f$. We have in fact, for every $g\in D(S)$
\begin{align*} \ip{\hat{S}f}{g}=-\ip{f}{Sg}&=-\int_0^1 f(x)\overline{g'(x)}dx\\& = -(f(1)-f(0))\overline{g(0)} +\int_0^1 f'(x)\overline{g(x)}dx\\& = \ip{f'-(f(1)-f(0))\delta_0}{g},\end{align*}
where $\delta_0$ denotes here the Dirac delta functional centered at $0$.
\enex
{
\beex \label{ex_noex} Let $(\H,\Ao)$ be a Hilbert quasi *-algebra; that is, $\H$ is the Hilbert space completion of a Hilbert algebra $\Ao$ with inner product $\ip{\cdot}{\cdot}$ (see, for instance, \cite{AT}). Let $\delta:\Ao\to \H$ be a qu*-derivation.
Then $\delta$ can be regarded as a densely defined linear operator in $\H$. To avoid possible confusion, if $T:\Ao\to \H$, we denote by $T^\star$ its adjoint. The qu*-derivation $\delta$ is closable if, and only if, it admits a densely defined adjoint $\delta^\star$.  Let $D(\delta^\star)$ denote the domain of $\delta^\star$. It is easy to check that $\xi \in D(\delta^\star)$ if, and only if, $\xi^*\in D(\delta^\star)$.
Let us assume that $\Ao \subset D(\delta^\star)$. Then, for every $x,y,z \in \Ao$, the following equality holds
$$ \ip{\delta(x)}{yz}= \ip{x}{\delta^\star(y)z}-\ip{x}{y\delta(z)}.$$
Hence $\delta^\star(yz)= \delta^\star(y)z -y\delta(z)$ for every $y,z \in \Ao$. Thus $\delta^\star$ is not, in general, a qu*-derivation. A necessary and sufficient condition for this to be true is that $\delta^\star(x)=-\delta(x)$, for every $x\in \Ao$. This is exactly what happens when $\delta=\delta_h$, where $h=h^*\in \A$ and $\delta_h(x)=i[h,x]=i(hx-xh)$, $x\in \Ao$. An easy computation shows, in fact, that \mbox{$\delta_h\subset -\delta_h^\star$}. 

Let $\xi\in D(\delta^\star)$ and $y\in \Ao$. then, for every $x\in \Ao$ we get
$$\ip{\delta(x)}{\xi y}= \ip{x}{\delta^\star(\xi)y}-\ip{x\delta(y^*)}{\xi}=\ip{x}{\delta^\star(\xi)y}-\ip{R_{\delta(y^*)}x}{\xi}.$$
From these equalities we deduce that $(D(\delta^\star), \Ao)$ is a quasi *-algebra if, and only if,
\begin{equation}\label{eqn_dom_star} D(\delta^\star)\subset \bigcap_{y\in \Ao}D(R_{\delta(y^*)}^\star).\end{equation}

The closure $\overline{\delta}$  clearly coincides with $\delta^{\star\star}$.
Let $\xi\in D(\overline{\delta})$; then there exists a sequence$\{z_n\}\subset \Ao$, such that $z_n\to \xi$ and $\delta{(z_n)} \to \overline{\delta}(\xi).$ Then, if $\eta \in D(\delta^\star)$ and $y\in \Ao$,
\begin{align*}\ip{\delta^\star(\eta)}{\xi y}&= \lim_{n\to \infty} \ip{\delta^\star(\eta )}{z_ny}=\lim_{n\to \infty} \ip{\eta}{\delta(z_n y)}\\
&=\lim_{n\to \infty} \ip{\eta}{\delta(z_n)y+ z_n\delta(y)} = \ip{\eta}{\overline{\delta}(\xi)y}+\lim_{n\to \infty} \ip{\eta}{z_n \delta(y)}.
\end{align*}
From these equalities it follows that $\xi y\in D(\delta^{\star\star})=D(\overline{\delta})$ if, and only if, $\xi\in D(\overline{R_{\delta(y)}})$. Therefore
$(D(\overline{\delta}), \Ao)$ is a quasi *-algebra if, and only if,
\begin{equation}\label{eqn_dom} D(\overline{\delta})\subset \bigcap_{y\in \Ao}D(\overline{R_{\delta(y)}}).\end{equation}
\enex
}

\section{Weak derivations on *-semisimple Banach quasi *-algebras}\label{Sec4}
Derivations, in many occurrences, stem out as generators of automorphism groups. In this section we will start our analysis taking first of all,  this point of view.
The discussion in the previous section makes clear that the existence of  \textit{sufficiently many sesquilinear forms $\varphi$ to work with} is crucial when dealing with this problem. For this reason, from now on we will focus our attention to the case of *-semisimple Banach quasi *-algebras.

\subsection{Infinitesimal generators of weak *-automorphism groups} As it is known, in the case of C*-algebras *-derivations arise as infinitesimal generators of *-automorphisms groups.
Let us examine this aspect.
\bedefi Let $(\A,\Ao)$ be a *-semisimple Banach quasi *-algebra and $\theta:\A\to \A$ a linear bijection. We say that $\theta$ is a weak *-automorphism of $(\A,\Ao)$ if
\begin{itemize}
\item[(i)] $\theta(a^*)=\theta(a)^*$, for every $a \in \A$;
\item[(ii)] $\theta(a)\wmult \theta(b)$ is well defined if, and only if, $a\wmult b$ is well defined and, in this case, $$\theta(a\wmult b)=\theta(a)\wmult \theta(b).$$\end{itemize}
\findefi

By the previous definition, if $\theta$ is a weak *-automorphism, then $\theta^{-1}$ is a weak *-automorphism too.

{
{\begin{lemma}\label{im_theta} If $\theta$ is a weak*-automorphism of a a *-semisimple Banach quasi *-algebra $(\A,\Ao)$, then $\theta(\A_{\bou})=\A_{\bou}$.
\end{lemma}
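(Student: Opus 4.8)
The plan is to reduce the statement to the characterization of bounded elements as universal weak multipliers. By Lemma~\ref{lemma_univmult} one has $\A_{\bou} = R_w(\A) \cap L_w(\A)$, so it suffices to show that $\theta$ preserves the property of being simultaneously a universal right and a universal left weak multiplier. The advantage of this reformulation is that $\A_{\bou}$ is described purely through the well-definedness of weak products $a \wmult b$, which is exactly the structure transported by property (ii) in the definition of a weak *-automorphism. I would deliberately avoid working directly with the continuity of the maps $L_a, R_a$ from the definition of a bounded element, since $\theta$ is only assumed to be a linear bijection of $\A$ and need neither be continuous nor map $\Ao$ into itself; that route does not interact cleanly with $\theta$.

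For the inclusion $\theta(\A_{\bou}) \subseteq \A_{\bou}$ I would fix $a \in \A_{\bou}$ and show $\theta(a) \in R_w(\A) \cap L_w(\A)$. Given an arbitrary $c \in \A$, surjectivity of $\theta$ furnishes $b \in \A$ with $c = \theta(b)$. Since $a \in R_w(\A)$, the product $b \wmult a$ is well defined, and the biconditional in the definition of a weak *-automorphism then yields that $\theta(b) \wmult \theta(a) = c \wmult \theta(a)$ is well defined as well. As $c$ was arbitrary, $\theta(a) \in R_w(\A)$. The symmetric argument, using $a \in L_w(\A)$ and the well-definedness of $a \wmult b$, shows that $\theta(a) \wmult c$ is well defined for every $c$, hence $\theta(a) \in L_w(\A)$. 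Combining the two gives $\theta(a) \in \A_{\bou}$.

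For the reverse inclusion I would invoke the observation, recorded just after the definition of weak *-automorphism, that $\theta^{-1}$ is again a weak *-automorphism. Applying the inclusion just proved to $\theta^{-1}$ gives $\theta^{-1}(\A_{\bou}) \subseteq \A_{\bou}$, which is the same as $\A_{\bou} \subseteq \theta(\A_{\bou})$; together with the first inclusion this yields $\theta(\A_{\bou}) = \A_{\bou}$. I do not expect a genuine obstacle, as the argument is a transport of structure once the characterization of $\A_{\bou}$ is in hand. The only point to watch is the standing hypothesis of Lemma~\ref{lemma_univmult}, which is stated for a quasi *-algebra with unit $\id$; the present lemma should accordingly be read under the same unitality assumption, since otherwise one must first re-establish $\A_{\bou} = R_w(\A) \cap L_w(\A)$ in the non-unital setting before the argument applies.
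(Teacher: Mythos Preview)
Your proposal is correct and follows essentially the same route as the paper: both arguments invoke Lemma~\ref{lemma_univmult} to identify $\A_{\bou}$ with $R_w(\A)\cap L_w(\A)$, use the biconditional in the definition of a weak *-automorphism together with surjectivity to show $\theta(\A_{\bou})\subseteq\A_{\bou}$, and then apply the same inclusion to $\theta^{-1}$ for the reverse containment. Your remark about the unitality hypothesis in Lemma~\ref{lemma_univmult} is a fair caveat that the paper leaves implicit.
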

\begin{proof} Let $a\in \A_{\bou}$, then $a\wmult b$ is well defined for every $b\in \A$. Hence, $\theta(a)\in R_w(\theta(\A))=R_w(\A)$. Similarly, $\theta(a)\in L_w(\A)$. Thus $\theta(a) \in \A_{\bou}$, by Lemma \ref{lemma_univmult}. Applying this result to $\theta^{-1}$ one gets the converse inclusion.
\end{proof}}

}
\bedefi Let $(\A,\Ao)$ be a *-semisimple Banach quasi *-algebra. Suppose that for every fixed $t\in {\mb R}$, $\beta_t$ is a weak *-automorphism of $\A$. If
 \begin{itemize} \item[(i)]$\beta_0(a)=a,$ $\forall a\in \A$  \item[(ii)] $\beta_{t+s}(a)= \beta_t(\beta_s(a))$, $\forall a\in \A$ \end{itemize}then we say that $\beta_t$ is a {\em one-parameter group of weak *-automorphisms of $(\A,\Ao)$}.
 If $\tau$ is a topology on $\A$ and the map $t\mapsto \beta_t(a)$ is $\tau$-continuous, for every $a\in \A$, we say that $\beta_t$ is a $\tau$-continuous weak *-automorphism group.
\findefi

The definition of the infinitesimal generator of $\beta_t$ is now quite natural. If $\beta_t$ is
$\tau$-continuous, we set

$$ \mathcal{D}(\delta_\tau)=\left\{a\in \A: \lim_{t\to 0} \frac{\beta_t(a)-a}{t} \mbox{ exists in $\A[\tau]$}\right\}$$
and
$$ \delta_\tau (a)=\tau-\lim_{t\to 0} \frac{\beta_t(a)-a}{t}, \quad a \in  \mathcal{D}(\delta_\tau).$$

If the involution $a\mapsto a^*$ is $\tau$-continuous, then $a\in \mathcal{D}(\delta_\tau)$ implies $a^*\in \mathcal{D}(\delta_\tau)$ and
$\delta(a^*)=\delta(a)^*$. Clearly, $\mathcal{D}(\delta_{\tau_n}) \subseteq \mathcal{D}(\delta_{\tau_{s^*}}) \subseteq \mathcal{D}(\delta_{\tau_w})$.

Of course, one would expect, in analogy with the C*-situation, that $\mathcal{D}(\delta_\tau)$ is a partial *-algebra and that $\delta$ is a *-derivation in a sense to be specified; in other words, we should
decide which form of Leibniz rule must be taken to define conveniently derivations on a partial
*-algebra. The following proposition suggests an answer to that question.
\begin{prop}\label{prop_38} Let $(\A,\Ao)$ be a *-semisimple Banach quasi *-algebra and $\beta_t$ a $\tau_{s^*}$-continuous weak *-automorphism group of $(\A,\Ao)$.
Then the following statements hold.
\begin{itemize}
\item[(i)] $\delta_{\tau_{s^*}}(a^*)=\delta_{\tau_{s^*}}(a)^*$ for all $a \in \mathcal{D}( \delta_{\tau_{s^*}})$;
\item[(ii)] If $a, b \in \mathcal{D}( \delta_{\tau_{s^*}})$ and $a\wmult b$ is well defined, then $a\wmult b\in \mathcal{D}( \delta_{\tau_{w}})$ and 
\begin{align*}\vp(\delta_{\tau_{w}}(a\wmult b)x,y)&=\vp(bx,\delta_{\tau_{s^*}}(a)^*y)+\vp(\delta_{\tau_{s^*}}(b)x,a^*y),\\ & \forall a,b \in \mathcal{D}( \delta_{\tau_{s^*}}),\, a\in L_w(b); x,y \in \Ao.\end{align*}
\item[(iii)] If $\mathcal{D}( \delta_{\tau_{w}})=\mathcal{D}( \delta_{\tau_n})$ then $\mathcal{D}( \delta_{\tau_n})$ is a partial *-algebra with respect to the weak multiplication.
\end{itemize}
\end{prop}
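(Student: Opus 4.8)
The plan is to handle the three items in order, the second being the substantive one. Throughout I write $\delta:=\delta_{\tau_{s^*}}$ and use repeatedly that, since each $\beta_t$ is a weak *-automorphism, $\beta_t(c^*)=\beta_t(c)^*$ for $c\in\A$ and $\beta_t$ preserves well-definedness of $\wmult$ with $\beta_t(a\wmult b)=\beta_t(a)\wmult\beta_t(b)$. For (i) I would simply observe that the family of seminorms defining $\tau_{s^*}$ is invariant under the involution (the seminorm of $a$ equals that of $a^*$), so $*$ is $\tau_{s^*}$-continuous. Hence, if $a\in\mathcal{D}(\delta)$ and $\tfrac{1}{t}(\beta_t(a)-a)\to\delta(a)$ in $\tau_{s^*}$, applying $*$ and using $\beta_t(a)^*=\beta_t(a^*)$ gives $\tfrac{1}{t}(\beta_t(a^*)-a^*)\to\delta(a)^*$ in $\tau_{s^*}$; thus $a^*\in\mathcal{D}(\delta)$ and $\delta(a^*)=\delta(a)^*$. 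This is just the specialization to $\tau=\tau_{s^*}$ of the remark preceding the statement.

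For (ii), set $c:=a\wmult b$ and $q_t:=t^{-1}(\beta_t(c)-c)$, and evaluate the $\tau_w$-seminorms on $q_t$. Using $\beta_t(c)=\beta_t(a)\wmult\beta_t(b)$, Definition \ref{weak_mult}, and (i) in the form $\beta_t(a)^*=\beta_t(a^*)$, one gets for all $\varphi\in\SSA$, $x,y\in\Ao$,
\[
\varphi(q_t x,y)=\frac{1}{t}\big[\varphi(\beta_t(b)x,\beta_t(a^*)y)-\varphi(bx,a^*y)\big].
\]
Inserting $\pm\varphi(\beta_t(b)x,a^*y)$ splits this into
\[
\varphi\Big(\beta_t(b)x,\tfrac{\beta_t(a^*)-a^*}{t}\,y\Big)+\varphi\Big(\tfrac{\beta_t(b)-b}{t}\,x,\,a^*y\Big).
\]
Passing to the limit is the heart of the matter: I would estimate each term by the Cauchy--Schwarz inequality for $\varphi$ together with the bound $\varphi(ux,ux)\le\|x\|_0^2\,\varphi(u,u)$, valid for $\varphi\in\SSA$, $u\in\A$, $x\in\Ao$ (a standard estimate for the forms in $\SSA$; cf. \cite{Ant1,Frag2}), which is what lets $\tau_s$-smallness be transferred across right multiplication by elements of $\Ao$. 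Combining the $\tau_s$-convergences $\tfrac{1}{t}(\beta_t(b)-b)\to\delta(b)$ and $\tfrac{1}{t}(\beta_t(a^*)-a^*)\to\delta(a^*)=\delta(a)^*$ with the $\tau_{s^*}$-continuity $\beta_t(b)\to b$ forces
\[
\varphi(q_t x,y)\longrightarrow\varphi(bx,\delta(a)^*y)+\varphi(\delta(b)x,a^*y).
\]

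The remaining, and genuinely delicate, step is to promote this scalarwise convergence to $\tau_w$-convergence of $q_t$ to an element of $\A$, which is exactly what $c\in\mathcal{D}(\delta_{\tau_w})$ requires. The scalar limits show that along any $t_n\to0$ the sequence $\{q_{t_n}\}$ is $\tau_w$-Cauchy; producing a limit $w\in\A$ (rather than merely in some completion) is where the sequential completeness of $\A[\tau_w]$ must enter, precisely the hypothesis used in Proposition \ref{prop_123}, whose computation this mirrors. Once $w$ is obtained, *-semisimplicity (Lemma \ref{lemma}, $\mathcal{R}^*=\{0\}$) makes it unique, $\varphi(wx,y)$ equals the right-hand side above for all $\varphi,x,y$, and the full family $q_t$ then $\tau_w$-converges to $w$, giving $\delta_{\tau_w}(c)=w$ together with the asserted Leibniz-type formula. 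I expect this passage — exhibiting the limiting element inside $\A$ — to be the main obstacle, and it is the point at which the role of $\tau_w$-completeness has to be made explicit.

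Finally, (iii) is a formal consequence. Under the hypothesis $\mathcal{D}(\delta_{\tau_w})=\mathcal{D}(\delta_{\tau_n})$, the set $M:=\mathcal{D}(\delta_{\tau_n})$ is a linear subspace (being the domain of a linear map) and is $*$-invariant, since the involution is $\|\cdot\|$-isometric, hence $\tau_n$-continuous, so the argument of (i) applies with $\tau=\tau_n$. If $a,b\in M$ and $a\wmult b$ is well defined, then $a,b\in\mathcal{D}(\delta_{\tau_{s^*}})$ by the inclusion $\mathcal{D}(\delta_{\tau_n})\subseteq\mathcal{D}(\delta_{\tau_{s^*}})$, whence (ii) gives $a\wmult b\in\mathcal{D}(\delta_{\tau_w})=M$. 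Thus $M$ is a $*$-invariant subspace of $(\A,\wmult)$ closed under the weak product whenever the latter is defined; the partial *-algebra axioms for $M$ are then inherited verbatim from those of $(\A,\wmult)$, so $M$ is a partial *-algebra.
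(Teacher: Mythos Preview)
Your argument for (ii) follows the paper's almost verbatim: the same insertion of $\pm\varphi(\beta_t(b)x,a^*y)$ to split the difference quotient, the same Cauchy--Schwarz estimates, and the same use of $\tau_{s^*}$-continuity to handle each piece; items (i) and (iii) are likewise treated just as the paper (implicitly) does.

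The one substantive difference is that you are more scrupulous than the paper on the passage from scalar convergence of $\varphi(q_tx,y)$ to the existence of a $\tau_w$-limit $w\in\A$. You correctly isolate this as the delicate point and propose resolving it via sequential $\tau_w$-completeness, exactly as in Proposition~\ref{prop_123}. The paper, by contrast, simply writes ``This proves at once that \ldots\ $a\wmult b\in\mathcal{D}(\delta_{\tau_w})$'' immediately after computing the scalar limits, without exhibiting such a $w$ or invoking any completeness hypothesis. Your worry is well-founded: the statement of the proposition does not assume $\A[\tau_w]$ sequentially complete, so strictly speaking you are importing an assumption not present in the hypotheses---but the paper's own proof glosses over precisely this step rather than supplying an alternative argument. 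In short, your route is the paper's route; you have merely made explicit a gap the paper leaves tacit.
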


\begin{proof} We prove only (ii). Let $a, b \in \mathcal{D}( \delta_{\tau_{s^*}})$, with $a\in L_w(b)$. If $x,y\in \Ao$, then
\footnotesize{\begin{align*}
\lim_{t\to 0} \vp\left(\frac{\beta_t(a\wmult b)-a\wmult b}{t}x,y\right) &= \lim_{t\to 0}\vp\left(\frac{\beta_t(a)\wmult\beta_t(b)-a\wmult b}{t}x,y\right)\\
&= \lim_{t\to 0}\frac{1}{t}\left[\vp((\beta_t(a)\wmult \beta_t(b))x,y) -\vp(\beta_t(b)x,a^*y) \right]\\
&+ \lim_{t\to 0}\frac{1}{t}\left[\vp(\beta_t(b)x,a^*y) -\vp(bx,a^*y) \right]
\end{align*}}
Now, for the first term on the right hand side, we have
\footnotesize{\begin{align*}
&\left| \frac{1}{t}[\vp((\beta_t(a)\wmult \beta_t(b))x,y) -\vp(\beta_t(b)x,a^*y)] -\vp(bx, \delta_{\tau_{s^*}}(a)^*y)\right|\\
&\leq \left|\vp\left(\beta_t(b)x,\frac{\beta_t(a)^*-a^*}{t}y \right)- \vp(\beta_t(b)x, \delta_{\tau_{s^*}}(a)^*y) \right| \\
&+ |\vp(\beta_t(b)x, \delta_{\tau_{s^*}}(a)^*y)-\vp(bx,\delta_{\tau_{s^*}}(a)^*y)|\\
&\leq \vp({\beta_t(b)x},{\beta_t(b)x})^{1/2}\vp\left(\frac{\beta_t(a)^*-a^*}{t}y- \delta_{\tau_{s^*}}(a)^*y,\frac{\beta_t(a)^*-a^*}{t}y- \delta_{\tau_{s^*}}(a)^*y\right)^{1/2}\\
&+ \vp((\beta_t(b)-b)x,\beta_t(b)-b)x)^{1/2}\vp(\delta_{\tau_{s^*}}(a)^*y,\delta_{\tau_{s^*}}(a)^*y)^{1/2}\to 0.
\end{align*}
}
because of the $\tau_{s^*}$-continuity of $\beta_t$ and of the involution.
As for the second term we have, taking into account that $b\in \mathcal{D}( \delta_{\tau_{s^*}})$,
$$\lim_{t\to 0}\frac{1}{t}\left(\vp(\beta_t(b)x,a^*y) -\vp(bx,a^*y)\right)= \vp(\delta_{\tau_{s^*}}(b)x,a^*y).$$

This proves at once that if $a,b\in \mathcal{D}( \delta_{\tau_{s^*}})$ and $a\wmult b$ is well-defined, then $a\wmult b \in \mathcal{D}( \delta_{\tau_{w}})$ and
$$\vp(\delta_{\tau_{w}}(a\wmult b)x,y)=\vp(bx,\delta_{\tau_{s^*}}(a)^*y)+\vp(\delta_{\tau_{s^*}}(b)x,a^*y), \quad \forall x,y \in \Ao.$$
\end{proof}

Proposition \ref{prop_38} suggests the following definition inspired by the one given in \cite{Ant3,Ant4} for partial *-algebras of unbounded operators.
\bedefi \label{defn_deriv}
Let $(\A,\Ao)$ be a *-semisimple Banach quasi *-algebra and $\delta$ a linear map of $\mathcal{D}(\delta)$ into $\A$, where $\mathcal{D}(\delta)$ is a partial *-algebra with respect to the weak multiplication $\wmult$. We say that $\delta$ is a {\em  weak *-derivation} of $(\A,\Ao)$ if
\begin{itemize}
\item[(i)] $\Ao\subset\mathcal{D}(\delta)$
\item[(ii)]$\delta(x^*)=\delta(x)^*, \; \forall x \in \Ao$
\item[(iii)] if $a,b\in\mathcal{D}(\delta)$ and $a\wmult b$ is well defined, then $a\wmult b\in\mathcal{D}(\delta)$ and
$$\vp(\delta(a\wmult b)x,y)= \vp(bx,\delta(a)^*y)+\vp(\delta(b)x,a^*y),$$
for all $\vp\in \SSA$, for every $x,y \in \Ao$.
\end{itemize}
\findefi

Clearly, every qu*-derivation is a weak *-derivation with the assumption $\mathcal{D}(\delta)=\Ao$.
\medskip

\beex The space $L^p({\mb R})$, $p\geq 1$, can be coupled with several *-algebras of functions (for instance, $C_c^\infty ({\mb R})$, $C_o ({\mb R})\cap L^p({\mb R})$, $W^{1,2} ({\mb R})$) to obtain a Banach quasi *-algebra.
 For $p\geq 2$, $(L^p({\mb R}), C_c^\infty ({\mb R}))$ is a *-semisimple Banach quasi *-algebra: the corresponding set $\SSA$ is given by the form $\vp_w$ defined for $w \in L^{\frac{p}{p-2}}({\mb R})$ (for $p=2$, $\frac{p}{p-2}=\infty$), $w \geq 0$,
$$ \vp_w (f,g)= \int_{\mb R} f(x) \overline{g(x)}w(x)dx.$$ The weak multiplication $f \wmult g$ is well defined if, and only if, $fg\in L^{p}({\mb R})$.
Let us define for $v\in {\mb R}$, $\beta_t(f)=f_t$ where $f_t(x)=f(x+t)$, $f\in L^p({\mb R})$. Then $\beta_t$ is a weak *-automorphisms group.
Its infinitesimal generator  is, formally, the derivative operator with domain $W^{1,p} ({\mb R})$.
If we change the *-algebra taking for instance $C_o ({\mb R})\cap L^p({\mb R})$ we see that the domain of $\delta$ does not contain $\Ao$, in general.
\enex

%
\section{Integrating weak *-derivations}
As it is known from Bratteli-Robinson theorem \cite{Brat1,Brat2, BratRob}) the fact that a *-derivation $\delta$ is closed is a necessary (albeit insufficient) condition for $\delta$ to be the generator of a norm continuous one-parameter group $\beta$ of *-automorphisms of a C*-algebra or, in other words, for $\delta$ to be {\em integrable}.  In this section we will prove analogous results in the case of *-semisimple Banach quasi *-algebras. The investigation performed in the previous sections on the closability of qu*- or weak derivation was, in a sense, preliminary for this scope.
\smallskip

\begin{thm}\label{HY1} Let $\delta:\mathcal{D}(\delta)\to\A[\|\cdot\|]$ be a weak *-derivation on a *-semisimple Banach quasi *-algebra $(\A,\Ao)$. Suppose that $\delta$ is the infinitesimal generator of a uniformly bounded, {$\tau_n$-continuous} group of weak *-automorphisms of $(\A,\Ao)$. Then $\delta$ is closed; its resolvent set $\rho(\delta)$ contains {$\mathbb{R}\setminus\{0\}$} and
\begin{equation}\label{eqn_lowbound}\|\delta(a)-\lambda a\|\geq|\lambda|\,\|a\|,\quad a\in\mathcal{D}(\delta),\lambda \in {\mb R}.\end{equation}
\end{thm}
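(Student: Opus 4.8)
The plan is to recognize this as the generator theory for strongly continuous one-parameter groups, transported to the Banach space $\A[\|\cdot\|]$. Although $\delta$ is a weak *-derivation, the three assertions concern only its action as the infinitesimal generator (in the $\tau_n$-topology) of the group $\beta_t$; the weak Leibniz rule of Definition \ref{defn_deriv} plays no role, and the whole argument lives at the level of the bounded linear operators $\beta_t$ on $\A$. Write $M:=\sup_{t\in\mb R}\|\beta_t\|<\infty$ for the uniform bound. I would prove closedness, $\mb R\setminus\{0\}\subset\rho(\delta)$, and the lower bound all at once, by constructing the resolvent explicitly as a Laplace transform of the group and reading everything off from it.

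First, for $\lambda>0$ I would define the candidate resolvent
$$R_\lambda a:=\int_0^{\infty}e^{-\lambda t}\beta_t(a)\,dt,\qquad a\in\A,$$
the $\A$-valued integral converging because $t\mapsto\beta_t(a)$ is $\tau_n$-continuous and norm-dominated by $M\|a\|$, and $\A$ is complete. The usual shift computation, using $\beta_h(R_\lambda a)=e^{\lambda h}\bigl(R_\lambda a-\int_0^h e^{-\lambda s}\beta_s(a)\,ds\bigr)$ and letting $h\to0^+$, shows $R_\lambda a\in\mathcal{D}(\delta)$ with $(\lambda I-\delta)R_\lambda a=a$, while for $a\in\mathcal{D}(\delta)$ the commutation $\beta_t\delta=\delta\beta_t$ on $\mathcal{D}(\delta)$ gives $R_\lambda(\lambda I-\delta)a=a$. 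Hence $R_\lambda=(\lambda I-\delta)^{-1}$ is a bounded, everywhere-defined operator and $\lambda\in\rho(\delta)$. For $\lambda<0$ I would run the identical argument for the group $t\mapsto\beta_{-t}$, whose generator is $-\delta$, obtaining $(\lambda I-\delta)^{-1}=-\int_0^{\infty}e^{\lambda t}\beta_{-t}(\,\cdot\,)\,dt$. Together these give $\mb R\setminus\{0\}\subset\rho(\delta)$. Closedness of $\delta$ is then immediate: $R_\lambda$ is bounded and injective, so its inverse $\lambda I-\delta$ is closed, and therefore so is $\delta$ (alternatively one argues directly from $\beta_t(a)-a=\int_0^t\beta_s(\delta a)\,ds$ by passing to the limit).

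Finally, the lower bound follows from estimating the integral: for $\lambda>0$,
$$\|R_\lambda a\|\le\int_0^\infty e^{-\lambda t}\|\beta_t(a)\|\,dt\le M\|a\|\int_0^\infty e^{-\lambda t}\,dt=\frac{M}{\lambda}\|a\|,$$
and symmetrically for $\lambda<0$, so that $\|R_\lambda\|\le M/|\lambda|$; substituting $b=(\lambda I-\delta)a$ turns this into $\|\delta(a)-\lambda a\|=\|(\lambda I-\delta)a\|\ge(|\lambda|/M)\|a\|$ for every $a\in\mathcal{D}(\delta)$. To recover the sharp constant $|\lambda|$ claimed in \eqref{eqn_lowbound} one needs $M=1$, and this is precisely where the weak *-automorphism structure enters: a uniformly bounded one-parameter group of weak *-automorphisms is in fact a group of isometries, since $\beta_{-t}=\beta_t^{-1}$ forces each $\beta_t$ and its inverse to be simultaneously contractive, whence $\|\beta_t(a)\|=\|a\|$ for all $t$. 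I expect the two genuine hurdles to be (i) the clean justification of the $\A$-valued integral and of the differentiation-under-the-integral that places $R_\lambda a$ in $\mathcal{D}(\delta)$ — routine but requiring completeness and $\tau_n$-continuity — and (ii) pinning down $M=1$, i.e.\ the isometric character of the automorphisms, which is the structure-dependent ingredient behind the precise estimate \eqref{eqn_lowbound}.
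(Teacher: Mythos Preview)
Your approach matches the paper's almost exactly: construct $R_\lambda$ as the Laplace transform of the group, verify it inverts $\lambda I-\delta$ from both sides, and read off the resolvent estimate. The paper proves closedness first, directly from the integral identity $\beta_t(a)-a=\int_0^t\beta_s(\delta(a))\,ds$ (your parenthetical alternative), rather than deducing it afterward from boundedness of the resolvent; but this is a cosmetic reordering, not a different strategy.

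The one substantive point is the constant in \eqref{eqn_lowbound}. You are right that the Laplace-transform estimate gives only $\|R_\lambda\|\le M/|\lambda|$ and hence $\|\delta(a)-\lambda a\|\ge(|\lambda|/M)\|a\|$. The paper simply writes $\|R_\lambda(a)\|\le\frac{1}{\lambda}\|a\|$ without further comment, effectively taking the uniform bound to be $M=1$; no independent justification is offered there. Your attempted justification, however, does not work: uniform boundedness together with $\beta_{-t}=\beta_t^{-1}$ tells you only that both $\beta_t$ and its inverse have operator norm at most $M$, which yields $M^{-1}\|a\|\le\|\beta_t(a)\|\le M\|a\|$ and nothing more --- it does not force contractivity, let alone isometry. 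Nothing in the definition of weak *-automorphism, nor in *-semisimplicity, makes $\beta_t$ an isometry on a general Banach quasi *-algebra. So either ``uniformly bounded'' in the hypothesis is to be read as ``bounded by $1$'' (which is what the converse Theorem \ref{HY2} actually produces, and what the examples in Section \ref{Sec6} exhibit), or the constant in \eqref{eqn_lowbound} should carry a factor $1/M$. You have correctly located the issue; just do not claim the group-inverse argument resolves it.
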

Before proving Theorem \ref{HY1}, we first remind some properties on one-parameter groups $\{\beta_t\}_{t\in\mathbb{R}}$ that can be proved as in \cite{Pazy}.

\begin{lemma}\label{beta_t} Let $(\A,\Ao)$ be a *-semisimple Banach quasi *-algebra and let $\{\beta_t\}_{t\in\mathbb{R}}$ as in 1. of Theorem \ref{HY1}. Let $\delta$ be the infinitesimal generator of $\{\beta_t\}_{t\in\mathbb{R}}$. Then
\begin{enumerate}
\item for $a\in\A$
$$\|\cdot\|-\lim_{h\to0}\frac1h\int_t^{t+h}\beta_s(a)ds=\beta_t(a);$$
\item for $a\in\A$, $\int_0^t\beta_s(a)ds\in\mathcal{D}(\delta)$ and
$$\delta\left(\int_0^t\beta_s(a)ds\right)=\beta_t(a)-a;$$
\item for $a\in\mathcal{D}(\delta)$, $\beta_t(a)\in\mathcal{D}(\delta)$ and
$$\frac{d}{dt}\beta_t(a)=\delta(\beta_t(a))=\beta_t(\delta(a));$$
\item for $a\in\mathcal{D}(\delta)$
$$\beta_t(a)-\beta_s(a)=\int_s^t\beta_r(\delta(a))dr=\int_s^t\delta(\beta_r(a))dr.$$
\end{enumerate}
\end{lemma}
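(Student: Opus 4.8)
The plan is to observe that the hypotheses say precisely that $\{\beta_t\}_{t\in\mathbb{R}}$ is a strongly continuous one-parameter group of bounded operators on the Banach space $(\A,\|\cdot\|)$ whose generator, in the classical sense of semigroup theory, coincides with the infinitesimal generator $\delta=\delta_{\tau_n}$ defined above. Indeed, ``uniformly bounded'' furnishes a constant $M>0$ with $\|\beta_t\|\le M$ for all $t$ (each $\beta_t$ being a bounded linear bijection of $\A$), $\tau_n$-continuity of $t\mapsto\beta_t(a)$ is exactly strong continuity, and $\delta_{\tau_n}(a)$ is by definition the $\|\cdot\|$-limit of the difference quotients $t^{-1}(\beta_t(a)-a)$, i.e.\ the $C_0$-group generator. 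Once this identification is in place, the four assertions are the standard properties of $C_0$-groups and their generators, and the proofs of \cite{Pazy} transfer verbatim; I only sketch them. Note first that, $s\mapsto\beta_s(a)$ being $\|\cdot\|$-continuous, the $\A$-valued Riemann integrals $\int_s^t\beta_r(a)\,dr$ exist and vary continuously with the endpoints.

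Statement (1) is the fundamental theorem of calculus for the continuous integrand $s\mapsto\beta_s(a)$: one has $\big\|\frac1h\int_t^{t+h}\beta_s(a)\,ds-\beta_t(a)\big\|\le\sup_{s\in[t,t+h]}\|\beta_s(a)-\beta_t(a)\|\to0$. For (2) I would form the difference quotient, move the bounded operator $\beta_h$ inside the integral, and use the group law $\beta_h\beta_s=\beta_{h+s}$ to obtain
\begin{align*}
\frac1h\big(\beta_h-\mathrm{id}\big)\!\int_0^t\!\beta_s(a)\,ds
&=\frac1h\!\int_0^t\!\big(\beta_{s+h}(a)-\beta_s(a)\big)\,ds\\
&=\frac1h\!\int_t^{t+h}\!\beta_s(a)\,ds-\frac1h\!\int_0^h\!\beta_s(a)\,ds;
\end{align*}
letting $h\to0$ and applying (1) at $t$ and at $0$ shows that $\int_0^t\beta_s(a)\,ds\in\mathcal{D}(\delta)$ and that its image under $\delta$ is $\beta_t(a)-a$.

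For (3), fix $a\in\mathcal{D}(\delta)$ and commute $\beta_t$ with the difference quotient,
$$\frac1h\big(\beta_h-\mathrm{id}\big)\beta_t(a)=\beta_t\!\left(\frac1h\big(\beta_h-\mathrm{id}\big)a\right).$$
Boundedness and $\tau_n$-continuity of $\beta_t$ let me pass to the limit inside, yielding $\beta_t(a)\in\mathcal{D}(\delta)$, the commutation $\delta(\beta_t(a))=\beta_t(\delta(a))$, and the existence of the right derivative of $t\mapsto\beta_t(a)$. The left derivative is treated by the standard device $\frac1h(\beta_t(a)-\beta_{t-h}(a))=\beta_{t-h}\big(\frac1h(\beta_h(a)-a)\big)$, where the uniform bound $M$ together with $a\in\mathcal{D}(\delta)$ controls the difference quotient while strong continuity handles the factor $\beta_{t-h}\to\beta_t$. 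Statement (4) then follows by integrating the norm-continuous map $r\mapsto\frac{d}{dr}\beta_r(a)=\beta_r(\delta(a))=\delta(\beta_r(a))$ over $[s,t]$ and invoking the fundamental theorem of calculus.

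The only place where the quasi *-algebra structure enters, beyond pure Banach-space theory, is this preliminary identification of $\{\beta_t\}$ as a genuine $C_0$-group of \emph{bounded} operators with generator $\delta$; here I must use that weak *-automorphisms are bounded bijections and that ``uniformly bounded'' supplies a single constant $M$. I expect no real obstacle afterwards: each remaining step is a vector-valued calculus manipulation identical to the scalar case, and the main effort is merely transcribing the arguments of \cite{Pazy} into the present notation.
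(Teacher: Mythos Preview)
Your proposal is correct and matches the paper's own treatment: the paper does not prove this lemma at all but simply states that these properties ``can be proved as in \cite{Pazy}'', which is exactly your identification of $\{\beta_t\}$ as a $C_0$-group on the Banach space $\A$ followed by the standard Pazy arguments. Your sketch of (1)--(4) is the textbook one and goes further than the paper itself.

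One small imprecision: the definition of weak *-automorphism in the paper requires only that $\theta$ be a linear bijection, not a bounded one, so your phrase ``weak *-automorphisms are bounded bijections'' is not justified by the definition alone. This does not affect the argument, since the hypothesis ``uniformly bounded'' in Theorem~\ref{HY1} already forces each $\beta_t$ to be a bounded operator with a common bound $M$, which is all you need.
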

\vspace{0.001cm}

\begin{proof}[Proof of Theorem \ref{HY1}] By (1) and (2) of Lemma \ref{beta_t}, if $a\in\A$, we define $a_t:=\frac1t\int_0^t\beta_s(a)ds$, then $a_t\in\mathcal{D}(\delta)$ for $t \in {\mb R}$ and $a_t\to a$ as $t\to 0$. We conclude $\overline{\mathcal{D}(\delta)}=\A$.

In order to prove that $\delta$ is closed, let $\{a_n\}$ in $\mathcal{D}(\delta)$ such that $a_n\to a$ and $\delta(a_n)\to w$ as $n\to\infty$. By (4) of Lemma \ref{beta_t},
$$\beta_t(a_n)-a_n=\int_0^t\beta_s(\delta (a_n))ds.$$
Considering the limit on both sides of the equality and using again $(4)$ of Lemma \ref{beta_t}, we obtain
$$\beta_t(a)-a=\int_0^t\beta_s(w)ds.$$
Dividing by $t\neq 0$ and taking the limit as $t\to0$,  we conclude by $(1)$ of Lemma \ref{beta_t} that $a\in\mathcal{D}(\delta)$ and $\delta(a)=w$, i.e. $\delta$ is closed.

{If $\lambda=0$, the inequality is obvious. Now we consider $\lambda>0$ and define the operator}
$$R_{\lambda}(a):=\int_0^{\infty}\e^{-\lambda t}\beta_t(a)dt.$$
The continuity of $t\mapsto\beta_t(a)$ for every $a\in\A$ and the uniform boundedness of $\beta_t$ in $t$ guarantee that the above operator is well-defined and
$$\|R_{\lambda}(a)\|\leq\frac1{\lambda}\|a\|.$$

Moreover, {$(\lambda I-\delta)(R_{\lambda}(a))=a$, for every $a\in \A$ and $R_{\lambda}((\lambda I-\delta) (a))=a$}, for every $a\in \D(\delta)$. Indeed, the right hand side of the following
\begin{align*}\frac{\beta_h-I}{h}(R_{\lambda}(a))&=\frac1h\int_0^{\infty}\e^{-\lambda t}\left[\beta_{t+h}(a)-\beta_t(a)\right]dt\\
&=\frac{\e^{\lambda h}-I}{h}\int_0^{\infty}\e^{-\lambda t}\beta_t(a)dt-\frac{\e^{\lambda h}}h\int_0^h\e^{-\lambda t}\beta_t(a)dt
\end{align*}
tells us that $R_{\lambda}(a)\in\mathcal{D}(\delta)$ and it converges to $\lambda R_{\lambda}(a)-a$ for every $a\in\A$ and $\lambda>0$. Thus, $(\lambda I-\delta)R_{\lambda}=I$.

By the closedness of $\delta$ and again by Lemma \ref{beta_t},  we obtain also the other equality. Indeed,
\begin{align*}R_{\lambda}(\delta(a))&=\int_0^{\infty}\e^{-\lambda t}\beta_t(\delta(a))dt=\int_0^{\infty}\e^{-\lambda t}\delta(\beta_t(a))dt\\
&=\lim_{y\to\infty}\int_0^y\e^{-\lambda t}\delta(\beta_t(a))dt=\lim_{y\to\infty}\delta\left(\int_0^y\e^{-\lambda t}\beta_t(a)dt\right)\\
&=\delta\left(\int_0^{\infty}\e^{-\lambda t}\beta_t(a)dt\right)=\delta(R_{\lambda}(a)).
\end{align*}
Hence, $R_{\lambda}$ is the inverse of $\lambda I-\delta$ and the conditions on the spectrum are verified.

{The case when $\lambda <0$ can be handled in very similar way, by defining the operator $R_{\lambda}(a)$ as
 $$R_{\lambda}(a):=\int_0^{\infty}\e^{\lambda t}\beta_{-t}(a)dt.$$}
\end{proof}

In order to prove that a closed weak *-derivation is the infinitesimal generator of uniformly bounded, $\tau_n-$continuous group of weak *-automorphisms further assumptions on $\delta$ are needed.

\begin{thm}\label{HY2} Let $\delta:\mathcal{D}(\delta)\subset\A_{\bou}\to\A[\|\cdot\|]$ be a closed weak *-derivation on a *-semisimple Banach quasi *-algebra $(\A,\Ao)$. Suppose that $\delta$ verifies the same conditions on its spectrum of Theorem \ref{HY1} and $\Ao$ is a core for every multiplication operator $\hat{L_a}$ for $a\in\A$, i.e. $\hat{L}_a=\overline{L}_a$. Then $\delta$ is the infinitesimal generator of a uniformly bounded, {$\tau_n$-continuous} group of weak *-automorphisms of $(\A,\Ao)$.
\end{thm}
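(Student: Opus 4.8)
The plan is to first \emph{generate} a one-parameter group from $\delta$ by a Hille--Yosida argument, and then to \emph{promote} this group of bounded operators to a group of weak *-automorphisms; the multiplicativity is the substantial point.

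\textbf{Generation.} Since $\Ao\subseteq\mathcal{D}(\delta)$ and $\Ao$ is $\|\cdot\|$-dense in $\A$, $\delta$ is densely defined, and it is closed by hypothesis. Reading \eqref{eqn_lowbound} for $\lambda>0$ and for $\lambda<0$ gives $\|(\lambda I-\delta)^{-1}\|\le|\lambda|^{-1}$ on all of $\mathbb{R}\setminus\{0\}\subseteq\rho(\delta)$, so by the Hille--Yosida theorem both $\delta$ and $-\delta$ generate $\tau_n$-continuous contraction semigroups; these assemble into a single $\tau_n$-continuous one-parameter group $\{\beta_t\}_{t\in\mathbb{R}}$ with $\|\beta_t\|\le 1$, generator $\delta$, and $\beta_t^{-1}=\beta_{-t}$. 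Thus each $\beta_t$ is a norm-bounded linear bijection, the group is uniformly bounded, and all the identities of Lemma \ref{beta_t} are available; in particular $\frac{d}{dt}\beta_t(c)=\delta(\beta_t(c))=\beta_t(\delta(c))$ for $c\in\mathcal{D}(\delta)$ and $\beta_t(\mathcal{D}(\delta))\subseteq\mathcal{D}(\delta)\subseteq\A_{\bou}$.

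\textbf{Compatibility with the involution.} Writing $J$ for the ($\|\cdot\|$-isometric) involution, the maps $\gamma_t:=J\beta_t J$ form a $\tau_n$-continuous one-parameter group whose generator is $J\delta J$, $a\mapsto\delta(a^*)^*$, on the $*$-invariant domain $\mathcal{D}(\delta)$. Since $\delta$ is $*$-symmetric --- the identity $\delta(a^*)=\delta(a)^*$ holds on $\Ao$ by Definition \ref{defn_deriv}(ii) and extends to $\mathcal{D}(\delta)$ --- one has $J\delta J=\delta$, so uniqueness of the infinitesimal generator gives $\gamma_t=\beta_t$, i.e. $\beta_t(a^*)=\beta_t(a)^*$ for all $a\in\A$. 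This is condition (i) of a weak *-automorphism.

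\textbf{Multiplicativity on the domain.} Because $\mathcal{D}(\delta)\subseteq\A_{\bou}$, for $a,b\in\mathcal{D}(\delta)$ the products $a\wmult b$, $\delta(a)\wmult b$ and $a\wmult\delta(b)$ are all well defined (bounded elements being universal multipliers, Lemma \ref{lemma_univmult}), and rewriting Definition \ref{defn_deriv}(iii) through Definition \ref{weak_mult} together with *-semisimplicity (Lemma \ref{lemma}) produces the \emph{elementwise} Leibniz rule $\delta(a\wmult b)=\delta(a)\wmult b+a\wmult\delta(b)$ in $\A$. Fixing $a,b\in\mathcal{D}(\delta)$ and putting $e(t):=\beta_t(a)\wmult\beta_t(b)-\beta_t(a\wmult b)\in\mathcal{D}(\delta)$, the scalar functions $t\mapsto\varphi((\beta_t(a)\wmult\beta_t(b))x,y)=\varphi(\beta_t(b)x,\beta_t(a)^*y)$ and $t\mapsto\varphi(\beta_t(a\wmult b)x,y)$ are $C^1$ for every $\varphi\in\SSA$ and $x,y\in\Ao$; differentiating the first by the product rule and comparing with Definition \ref{defn_deriv}(iii) applied to $\beta_t(a),\beta_t(b)$ shows $\frac{d}{dt}\varphi((\beta_t(a)\wmult\beta_t(b))x,y)=\varphi(\delta(\beta_t(a)\wmult\beta_t(b))x,y)$. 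Hence $e(0)=0$ and $\frac{d}{dt}\varphi(e(t)x,y)=\varphi(\delta(e(t))x,y)$ for all $\varphi,x,y$, and one concludes $\beta_t(a\wmult b)=\beta_t(a)\wmult\beta_t(b)$ from a uniqueness theorem for this weak Cauchy problem. \emph{This uniqueness is the main obstacle:} since the weak multiplication is not jointly $\|\cdot\|$-continuous, $t\mapsto\beta_t(a)\wmult\beta_t(b)$ need not be norm-differentiable, so the argument available in the C*-algebra case (both sides solve the same norm-ODE $u'=\delta(u)$) cannot be used directly. The hypothesis $\mathcal{D}(\delta)\subseteq\A_{\bou}$ is what keeps every product inside the Banach algebra $\A_{\bou}$ and supplies the elementwise Leibniz rule; to close the uniqueness I would pass to the GNS representations attached to the forms $\varphi\in\SSA$, in which $e(t)$ becomes an operator-valued, weakly-$C^1$ solution of a Hilbert-space Cauchy problem with vanishing initial datum, where uniqueness is standard.

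\textbf{Extension to $\A$ and conclusion.} Let $a,b\in\A$ with $a\wmult b$ well defined. The core hypothesis $\hat{L}_a=\overline{L}_a$ provides $y_n\in\Ao$ with $\|y_n-b\|\to0$ and $\|ay_n-a\wmult b\|\to0$. Since $y_n\in\Ao\subseteq\mathcal{D}(\delta)$, the domain case --- extended to a general first factor by approximating $a$ with elements of $\mathcal{D}(\delta)$ and using that right multiplication by the bounded elements $y_n$ and $\beta_t(y_n)$ is $\|\cdot\|$-continuous --- gives $\beta_t(ay_n)=\beta_t(a)\wmult\beta_t(y_n)$, that is $\varphi(\beta_t(y_n)x,\beta_t(a)^*y)=\varphi(\beta_t(ay_n)x,y)$ for all $\varphi,x,y$. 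Letting $n\to\infty$ and using $\beta_t(y_n)\to\beta_t(b)$, $\beta_t(ay_n)\to\beta_t(a\wmult b)$ in norm together with the continuity of the forms yields $\varphi(\beta_t(b)x,\beta_t(a)^*y)=\varphi(\beta_t(a\wmult b)x,y)$ for all $\varphi,x,y$; by Definition \ref{weak_mult} this means precisely that $\beta_t(a)\wmult\beta_t(b)$ is well defined and equals $\beta_t(a\wmult b)$. Applying the same to $\beta_{-t}=\beta_t^{-1}$ gives the reverse implication, so $\beta_t(a)\wmult\beta_t(b)$ is well defined if and only if $a\wmult b$ is, establishing condition (ii). Together with $\beta_0=\mathrm{id}$, $\beta_{t+s}=\beta_t\beta_s$ and the $\tau_n$-continuity from the first step, this exhibits $\{\beta_t\}$ as a uniformly bounded, $\tau_n$-continuous one-parameter group of weak *-automorphisms with infinitesimal generator $\delta$.
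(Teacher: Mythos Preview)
Your generation step, involution argument, and the final extension step using the core hypothesis are all in line with the paper. The substantive divergence is in the multiplicativity on $\mathcal{D}(\delta)$, and here your proposal has a genuine gap that you yourself flag but do not close.

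The paper does \emph{not} argue via uniqueness of a weak Cauchy problem. Instead, it passes to \emph{analytic elements}: the set of $a\in\bigcap_n\mathcal{D}(\delta^n)$ for which $z\mapsto\sum_n\frac{z^n}{n!}\delta^n(a)$ converges in norm near $0$ is dense (standard for $C_0$-semigroups). For analytic $a,b$ one has $\delta^k(a),\delta^k(b)\in\mathcal{D}(\delta)\subset\A_{\bou}$ for all $k$, so every cross-product $\delta^m(a)\wmult\delta^n(b)$ makes sense, the elementwise Leibniz rule you derived iterates to the binomial identity $\delta^n(a\wmult b)=\sum_k\binom{n}{k}\delta^{n-k}(a)\wmult\delta^k(b)$, and the Cauchy product of the two \emph{norm}-convergent series gives $\beta_t(a\wmult b)=\beta_t(a)\wmult\beta_t(b)$ directly. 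One then extends to all of $\A_{\bou}$ by density of analytic elements, then to $a\in\A$, $b\in\A_{\bou}$, and finally to arbitrary $a,b$ via the core hypothesis, just as you do.

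Your ODE route stalls precisely where you say it does: $t\mapsto\beta_t(a)\wmult\beta_t(b)$ is only shown to be $\tau_w$-differentiable, so the standard uniqueness trick (differentiating $s\mapsto\beta_{t-s}(e(s))$) is unavailable, and the suggested detour through GNS representations is left as a promissory note rather than an argument. The analytic-vector method is exactly the classical device (as in Bratteli--Robinson for C*-algebras) that bypasses the failure of joint continuity: it trades differentiability for norm-convergent power series, where multiplicativity is an algebraic identity. Replacing your third paragraph with that argument would complete the proof along the paper's lines.
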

\begin{proof}
We want to show that the norm limit
$$\beta_t(a):=\lim_{n\to\infty}\left(I-\frac tn\delta\right)^{-1}(a)$$
gives us a uniformly bounded, {$\tau_n$-continuous} weak *-automorphisms group of $(\A,\Ao)$.

The existence of this limit can be derived by applying the theory of $C_0$-semigroups in Banach spaces \cite[Chap.12]{HP}.
Moreover, the map $t\in {\mb R}\to \beta_t(a)$ is norm continuous since \cite[p.362]{HP}  the convergence is uniform in every finite interval $[0,t_0]$.

By the condition on the spectrum of $\delta$, $\beta_t$ is, for every $t\in {\mb R}$, a bounded operator in $\A$ and all its powers are well defined. By the lower bound condition we obtain, for every $n\in\mathbb{N}^*$ ,
$$
\left\|\left(I-\frac tn\delta\right)^{-n}\!\!\!\!(a)\right\|\leq\|a\|,\quad\forall a\in\A$$
Hence passing to the limit we have $\|\beta_t(a)\|\leq\|a\|$ for {every $a\in\A$}.

Let $t\in\mathbb{R}$ be fixed. Then $\beta_t$ is a continuous linear and bijective operator. Moreover, $\beta_t$ preserves the involution, i.e. $\beta_t(a)^*=\beta_t(a^*)$ for every $a\in\A$. Further $\delta$ commutes with all its negative powers, so for every $a\in\A$ $\beta_t(\delta(a))=\delta(\beta_t(a))$.

By the fact that $\beta_t(a)$ is the solution of the Cauchy problem $\beta'_t(a)=\beta_t(\delta(a))$ with initial condition $\beta_0(a)=a$, we achieve the group property, i.e. $\beta_{t+s}(a)=\beta_t(\beta_s(a))$ for every $a\in\A$, $t,s\in\mathbb{R}$.

The set of analytic elements, i.e. the set of all elements $a\in\mathcal{D}(\delta^n)$, for every $n\in {\mb N}$, such that the power series
$$z\in\mathbb{C}\mapsto \sum_{n=0}^{\infty}\frac{z^n}{n!}\delta^n(a)\in\A$$
is well defined and analytic on a neighbourhood of the origin, is dense in $\A$ by \cite[Thm. 2.7]{Pazy}.

The last property we are going to prove is that $\beta_t$ is a weak *-automorphism, i.e. $\beta_t(a)\wmult \beta_t(b)$ is well defined if, and only if, $a\wmult b$ is well defined and, in this case, $\beta_t(a\wmult b)=\beta_t(a)\wmult \beta_t(b)$.

By the hypotheses, $\mathcal{D}(\delta)\subset\A_{\bou}$ is a partial *-algebra with respect to the weak multiplication $\mult$. By the boundedness of $a,b$, we can rewrite the weak Leibnitz rule as
$$\delta(a\wmult b)=\delta(a)\wmult b+a\wmult\delta(b).$$

Now suppose that $a,b$ are analytic elements. Therefore $a,b\in\mathcal{D}(\delta^k)$ for every $k\in\mathbb{N}$ and $\delta^k(a),\delta^k(b)\in\A_{\bou}$ for every $k$ because $\delta^k(a)\in\mathcal{D}(\delta^{k+1})\subset\mathcal{D}(\delta)\subset\A_{\bou}$. Hence, all the products $\delta^m(a)\wmult\delta^n(b)$ are well-defined for every $n,m\in\mathbb{N}$.

By the above argument, it is easy to prove by induction that
$$\delta^n(a\wmult b)=\sum_{k=0}^n\binom{n}{k}\delta^{n-k}(a)\wmult\delta^k(b)$$
for every $a,b$ analytic elements. In a very standard way we achieve the weak *-automorphism property in the case $a,b$ are analytic elements.

Using the density of the set of analytic elements and the boundedness of the elements one proves the equality
$$\beta_t(a\wmult b)=\beta_t(a)\wmult\beta_t(b),\quad a,b\in\A_{\bou}.$$

Let $a\in\A$ and $b\in\A_{\bou}$. Approximating an unbounded element $a$ through a sequence $a_n$ of bounded elements, the weak product $a\wmult b$ can be approximated by the sequence $a_n\wmult b$ and we get
$$\beta_t(a\wmult b)=\beta_t(a)\wmult\beta_t(b)\quad\text{for}\;a\in\A,b\in\A_{\bou}.$$

Suppose now that both $a,b\in\A$ are unbounded. By hypothesis, $\A_{\bou}$ is a core for every $L_a$, then there exists a sequence $\{b_n\}\in\A_{\bou}$ that norm converges to $b$ such that $\|a\wmult b_n-a\wmult b\|$ vanishes as $n$ increases. By norm continuity of $\beta_t$ we achieve the weak automorphism property for $\beta_t$, i.e.
$$\beta_t(a\wmult b)=\beta_t(a)\wmult\beta_t(b)\quad\forall a,b\in\A.$$
\end{proof}

\berem The additional hypotheses of Theorem \ref{HY2} are satisfied by the \textit{weak derivative in $L^p(I,d\lambda)$}, where $I=[0,1]$ and $\lambda$ is the Lebesgue measure.

In this case $\mathcal{D}(\delta)=W^{1,p}(I,d\lambda)$ and it is well known (see \cite[Thm 8.8]{Brezis}) that if $u\in W^{1,p}(I,d\lambda)$ then $u\in L^{\infty}(I,d\lambda)$ and there exists $c>0$ such that
$$\|u\|_{\infty}\leq c\|u\|_{1,p}.$$
{
Moreover, in the case $p=2$, it is possible to show that every weak *-automorphism of $L^2(I,d\lambda)$, coupled for instance with $L^{\infty}(I,d\lambda)$ or $\mathcal{C}(I)$, is automatically $\tau_n-$continuous with the same strategy employed in \cite{AT}. Indeed, if $\theta$ is a weak automorphism of $L^2(I,d\lambda)$, then $\theta$ is an intertwining operator with the couple $(R_x,R_{\theta(x)})$, i.e. $\theta\circ R_x=R_{\theta(x)}\circ\theta$ for every $x\in\Ao$. By Lemma \ref{im_theta}, $\theta(\A_{\bou})=\A_{\bou}$ thus the operator $\overline{R_{\theta(\psi)}}$ is everywhere defined and continuous.
}
\enrem

\section{Examples and applications}\label{Sec6}
In this section we present some examples of weak *-derivations and one-parameter groups generated by them. 

Consider again the example of inner qu*-derivations; i.e., $\delta:\Ao\to\A$ is a densely defined derivation determined as $\delta_h(x):=i[h,x]$ for a \textit{self-adjoint} element $h\in\A$, i.e. $h=h^*$ and $\sigma(h)\subset\mathbb{R}$.

\subsection{Inner qu*-derivations}

\textbf{Case 1}: Suppose first that $h$ is a \textit{bound\-ed element}. As we have already seen, in this case $\odelta_h(x)$ is \textit{continuous}.

Like in the classical case, what we would expect is a continuous one-parameter group $\{\beta_t\}_{t\in\mathbb{R}}$ of weak *-automorphisms of $(\A,\Ao)$ of the form
$$\beta_t(a)=\e^{ith}\wmult a\wmult \e^{-ith}\quad\text{for all}\,\;t\in\mathbb{R}.$$

Suppose that $(\A,\Ao)$ is a *-semisimple Banach quasi *-algebra with unit $\id$. Then we define by Taylor series $\e^{ith}$ as
$$\e^{ith}:=\sum_{n=0}^{\infty}\frac{(ith)^n}{n!},$$
where the series on the right hand side converges with respect to $\|\cdot\|_b$.
We stress the fact that $h^n$ is the weak product of $h$ with itself $n$ times. The above series is well defined, the exponential $\e^{ith}\in\A_{\bou}$ and all the known properties remain valid.

For each $t\in\mathbb{R}$, $\beta_t(a):=\e^{ith}\wmult a\wmult\e^{-ith}$ is a weak *-automorphism of $(\A,\Ao)$. We notice that by the separate continuity of multiplication and the *-semisimplicity of $(\A,\Ao)$ the use of brackets is not needed.
\vspace{0.05cm}

If we fix $t\in\mathbb{R}$, then it is routine to prove that $\beta_t$ is a linear map preserving the weak multiplication when defined.

Its inverse is given by $\beta_t^{-1}(a)=\e^{-ith}a\e^{ith}=\beta_{-t}(a)$ and $\beta:t\to\beta_t$ is in fact a weak *-automorphism group of $(\A,\Ao)$.
\vspace{0.05cm}

Self-adjoint elements can be characterized as those elements such that $\|\e^{ith}\|_{\bou}=1$. Following \cite[Proposition 2.4.12]{Dales}, we have that, for a *-semisimple quasi *-algebra $(\A,\Ao)$ and $h\in\A_{\bou}$ such that $h=h^*$, $\sigma(h)\subset\mathbb{R}$ if, and only if, $r_{\bou}(\e^{ith})=1$. Noticing that $(\e^{ith})^*=\e^{-ith}$ and, moreover, $\e^{ith}\wmult\e^{-ith}=\id=\e^{-ith}\wmult\e^{ith}$, we have that $\e^{ith}\in\A_{\bou}$ is normal and then $r(\e^{ith})=1=\|\e^{ith}\|_{\bou}$. Therefore we conclude that $\{\beta_t\}_{t\in\mathbb{R}}$ is uniformly bounded in $t$ by the following computations $\|\beta_t(a)\|\leq\|\e^{ith}\|_{\bou}^2\|a\|=\|a\|$.
\medskip

By standard computations it is easy to check that $\{\beta_t\}_{t\in\mathbb{R}}$ is really a norm continuous one-parameter group, i.e. $\beta_0(a)=a=\text{Id}(a)$, $\beta_{t+s}(a)=\beta_t\circ \beta_s(a)$ and $\|\beta_t(a)-a\|$ vanishes as $t\to0$, for every $a\in\A$.

We now compute the infinitesimal generator of $\{\beta_t\}_{t\in\mathbb{R}}$. What we expect is the closure of the inner qu*-derivation $\delta_h$ for $h\in\A_{\bou}$.
Indeed, it is straightforward to prove that $\frac{d}{dt}_{| t=0}\e^{ith}=ih$, so

$$\lim_{t\to0}\frac{\beta_t(a)-a}{t}=\lim_{t\to0}\frac{\e^{ith}\wmult a\wmult \e^{-ith}-a}{t}=ih\wmult a-ih\wmult a=\odelta_h(a)$$
for every $a\in\A=\mathcal{D}(\odelta_h)$. Therefore $\odelta_h$ is everywhere defined and continuous.

{
\berem Note that $\odelta_h$ is everywhere defined, that is $\mathcal{D}(\odelta_h)\not\subset\A_{\bou}$. Hence the hypothesis on the boundedness of $\mathcal{D}(\delta)$ is sufficient, but not necessary, to obtain a uniformly bounded norm continuous one-parameter group of weak *-automorphisms.
\enrem
}
\medskip

\textbf{Case 2}: We now consider the case in which $h$ is self-adjoint, as before, but \textit{unbounded}, i.e. $h\in\A\setminus\A_{\bou}$.

By definition given in Section 2, $\lambda\in\rho(h)$ if, and only if, $\lambda\in\rho(\overline{L}_h)\cap\rho(\overline{R}_h)$. We suppose that the element $h$ verifies the following condition
$$\|(h+i\gamma)^{-1}\|_{\bou}\leq\frac1{|\gamma|},\quad\gamma\in\mathbb{R}.$$
This, in turn, implies that
\begin{align*}
\|(\overline{L}_h+i\gamma I)^{-1}\|_{\mathcal{B}(\A)}&\leq\frac1{|\gamma|},\quad\gamma\in\mathbb{R}\\
\|(\overline{R}_h+i\gamma I)^{-1}\|_{\mathcal{B}(\A)}&\leq\frac1{|\gamma|},\quad\gamma\in\mathbb{R}.
\end{align*}
For every $t\in\mathbb{R}$, $it\in\rho(h)$ implies $it\in\rho(\overline{L}_h)\cap\rho(\overline{R}_h)$. Then there exists $\{U_L(t)\}_{t\in\mathbb{R}}$ strongly operator continuous one-parameter group such that $\|U_L(t)\|_{\mathcal{B}(\A)}\leq1$ for every $t\in\mathbb{R}$ and $\overline{L}_h$ is the infinitesimal generator of $\{U_L(t)\}_{t\in\mathbb{R}}$ (see \cite{Kato}).

Similarly, there exists a strongly continuous one-parameter group $\{U_R(t)\}_{t\in\mathbb{R}}$ such that $\|U_R(t)\|_{\mathcal{B}(\A)}\leq1$ for every $t\in\mathbb{R}$ and $\overline{R}_h$ is the infinitesimal generator of $\{U_R(t)\}_{t\in\mathbb{R}}$
\smallskip

Let us define
$$u_L(t):=U_L(t)(\id)\quad\text{and}\quad u_R(t):=U_R(t)(\id).$$
Since both $u_L(t)$ and $u_R(t)$ are solutions of the same differential equation $\frac{d u}{dt}=ihu$ with boundary condition $u(0)=\id$ in $\A$, $u_L(t)=u_R(t)$ for every $t\in\mathbb{R}$.

Hence we define
$$\e^{ith}:=u_L(t)=u_R(t),\quad t\in\mathbb{R}.$$
The exponential is a bounded element of $(\A,\Ao)$. Indeed, by \cite[Lemma 2.5.3]{FragCt}, it is easy to check that
$$\left(I-\frac{it}{n}\overline{L}_h\right)^{-n}(x)=\left(\left(I-\frac{it}{n}\overline{L}_h\right)^{-n}\id\right)x,\quad\forall x\in\Ao.$$
By the assumption on $h$, the element $(I-\frac{it}{n}\overline{L}_h)^{-n}\id$ is \textit{left-bounded} with the bound  not depending neither on $n$ nor on the element $e$. Therefore it is possible to extend the above equality for generic elements in $\A$
$$\left(I-\frac{it}{n}\overline{L}_h\right)^{-n}(a)=\left(\left(I-\frac{it}{n}\overline{L}_h\right)^{-n}\id\right)\wmult a,\quad\forall a\in\A.$$
Hence, by the strong continuity of $U_L(t)$, we achieve
$$U_L(t)a:=\lim_{n\to\infty}\left(I-\frac{it}{n}\overline{L}_h\right)^{-n}a=\lim_{n\to\infty}\left(\left(I-\frac{it}{n}\overline{L}_h\right)^{-n}\id\right)a$$
and $\|U_L(t)a\|\leq\|a\|$ for every $a\in\A$.

Analogously, $U_R(t)\id$ is \textit{right-bounded} and $\|U_R(t)a\|\leq\|a\|$. Then we conclude that $\e^{ith}$ is \textit{bounded}.

We have $U_L(t)a=U_L(t)\id\wmult a$ for every $a\in\A$. This is surely true for bounded elements because they are solutions of the same equation with the same boundary condition. Therefore, by separate continuity of the multiplication, the equality is true for every $a\in\A$. Hence we obtain $\|\e^{ith}\|_{\bou}\leq1$ and the group property $\e^{ith}\e^{ish}=\e^{i(t+s)h}$ for every $t,s\in\mathbb{R}$.

Since
$$\left(\left(I-\frac{it}n\overline{L}_h\right)^{-1}(a)\right)^*=\left(I+\frac{it}{n}\overline{R}_h\right)^{-1}(a^*),\quad\forall a\in\A,$$
$u_L(t)^*=u_r(-t)=u_L(-t)$. Then $\|\e^{ith}\|_{\bou}=1$.
\medskip

Defining $\beta_t(a):=\e^{ith}\wmult a\e^{-ith}$, we already know that $\{\beta_t\}_{t\in\mathbb{R}}$ is uniformly bounded continuous one-parameter group of continuous weak *-automorphisms. {The infinitesimal generator is given by the closure of the weak *-derivation $\delta_h(x)=i[h,x]$ for $x\in\Ao$. This closure is given by
$$\overline{\delta}_h(a)=\lim_{t\to0}\frac{\beta_t(a)-a}t=\frac{\e^{ith}\wmult a\wmult\e^{-ith}-a}t=i(h\wmult a-a\wmult h)$$
when $a$ is \textit{bounded}.}

\subsection{A physical example: quantum lattice systems}\label{sect_phys} 
The study of derivations and automorphism is important for physical applications to quantum systems with infinitely many degrees of freedom as, for instance, spin lattice systems. Without giving full details (for which we refer to \cite{Ant1,Bag1,Bag4,TW}) we give an outline of their mathematical description and show how the ideas developed in this paper may give some help when dealing with them.

Let $V$ is a finite region of a $d-$dimensional lattice and $\A_V$  the C*-algebra generated by the Pauli operators $\vec{\sigma}_p=(\sigma_p^1, \sigma_p^2, \sigma_p^3)$ at each point $p$ of the finite region $V$ (the number of points of $V$ is indicated by $|V|$) and by the identity matrix $I_p\in M_2(\mathbb{C})$. It is easy to show that $\A_V$ is isomorphic to $M_{2^{|V|}}(\mathcal{H}_V)$, where $\mathcal{H}_V=\otimes_{p\in V}{\mb C}_p^2$, and ${\mb C}_p^2$ is the 2-dimensional space at $p\in V$.

If $V\subset V'$, then there exists a natural embedding $\mathfrak{A}_V\hookrightarrow\mathfrak{A}_{V'}$, defined in obvious way. Hence $\Ao:=\cup_V\mathfrak{A}_V$ is a C*-algebra, called the C*-algebra of local observables; its norm is denoted by $\|\cdot\|_0$.

To any infinite sequence $\{n\}=\{n_i\}_{i=1}^\infty$ of unit vectors in $\mathbb{R}^3$ there corresponds a state $\ipp{}{\{n\}}$, constructed as in \cite[Section 11.3.1]{Ant1}. This state determines (GNS construction) a *-representation of $\Ao$ defined on the domain $\mathcal{D}^0_{\{n\}}=\ipp{\Ao}{\{n\}}$ whose completion is denoted by $\mathcal{H}_{\{n\}}$.
Then one can define a family of vectors
$$\left\{\ipp{}{\{m\},\{n\}}=\ipp{\otimes_p}{m_p,n_p};m_p=0,1,\sum_pm_p<\infty\right\}$$
which constitutes an orthonormal basis of ${\mc H}_{\{n\}}$. Each vector ($\ipp{}{\{m\},\{n\}}$ is obtained by flipping a finite number of spins in the {\em ground state} $\ipp{}{\{n\}}$.

Then, an unbounded self-adjoint operator $M$ acting on $\mathcal{H}_{\{n\}}$ is defined by
$$\ipp{M}{\{m\},\{n\}}=\ipp{\left(\sum_pm_p\right)}{\{m\},\{n\}}.$$

Roughly speaking, $M$ counts the number of flipped spins in $\ipp{}{\{m\},\{n\}}$ with respect to the ground state $\ipp{}{\{n\}}$.

Note that $M$ is strictly depending on the chosen sequence $\{n\}$. We set $\pi_{\{n\}}:\Ao\to\mathcal{L}^{\dagger}(\mathcal{D}_{\{n\}})$ to be the GNS *-representation defined by $\{n\}$ and we suppose that $\pi_{\{n\}}$ is faithful.
The operator $M$ is a number operator. Therefore, the operator $\e^M$ is a densely
defined self-adjoint operator. Let $\D$ denote its domain. Then $\D$ can be made into a Hilbert space,
denoted by $\H_M$, in canonical way. The norm in $\H_M$, in fact, is given by
$\| f\|_M =\|\e^M f \|, f\in \H_M .$

Let us assume that, for every $x\in \Ao$, both $\|\e^{M}\pi_{\{n\}}(x)\e^{-M}\|_0$ and $\|\e^{-M}\pi_{\{n\}}(x)\e^{M}\|_0$ are finite. Then the completion  $\A$ of $\Ao$ with respect to the norm
$$\|x\|:= \|\e^{-M}\pi_{\{n\}}(x)\e^{-M}\|$$ is a *-semisimple Banach quasi *-algebra.
Assuming that $h_V$ is the Hamiltonian of the finite volume system, then $h_V\in\Ao$ and then $\e^{ith_V}\in\Ao$. Now we define
$$\delta_V(x):=i[h_V,x],\quad\forall x\in\Ao.$$

By Proposition \ref{clos_inn}, $\delta_V$ is closable and, by $h_V\in\Ao$, it is actually continuous. Hence $\delta_V$ is infinitesimal generator of uniformly bounded norm continuous one parameter group of norm continuous weak *-automorphisms
$$\alpha^V_t(a)=\e^{ith_V}\wmult a\wmult \e^{-ith_V},\quad\forall a\in\A.$$
The interesting point comes when considering the so-called thermodynamical limit of the local dynamics; i.e. the $\lim_{|V|\to \infty} \delta_V$. This limit, in general, fails to exists in the C*-algebra topology of $\Ao$. It is here that the Banach quasi *-algebra structure plays a role, by taking the completion with respect to the norm $\|\cdot\|$ of $\A$. As shown in \cite{Ant1, Bag4}, under certain conditions, this limit exists and defines a weak *-derivation $\delta$ of $(\A,\Ao)$ which generates a one parameter group of *-automorphisms.

\section*{Concluding remarks}

Derivations are widely studied in mathematics for their importance in describing physical systems (see Section \ref{sect_phys}) and  for their own  interest. In this paper we investigated the problem of giving a proper definition of derivation in the framework of (mainly, *-semisimple) Banach quasi *-algebras

It would be interesting to examine the case of locally convex quasi *-algebra. In the aforementioned example the number operator $M$ is strictly depending on the state $\{n\}$, thus it is more convenient to consider a locally convex topology taking into account the entire family of states. This gives birth to a locally convex quasi *-algebra where a thermodynamical limit, possibly independent of the the representations, could lives.  We leave this question to future papers.
\bigskip

{\bf{Acknowledgement:} }
This work has been done in the framework of the project ''Alcuni aspetti di teoria spettrale di operatori e di algebre; frames in spazi di Hilbert rigged'', INDAM-GNAMPA 2018.

\end{document}